\theoremstyle{plain}
\newtheorem{thm}{Theorem}[section]
\newtheorem{lem}[thm]{Lemma}
\newtheorem{cor}[thm]{Corollary}
\theoremstyle{definition}
\theoremstyle{remark}
\newtheorem{rem}[thm]{Remark}
\newtheorem*{remark*}{Remark}
\numberwithin{equation}{section}
\newcommand{\cF}{{\mathcal{F}}}
\newcommand{\cH}{{\mathcal{H}}}
\newcommand{\cS}{{\mathcal{S}}}
        \newcommand{\field}[1]{{\mathbb{#1}}}
        \newcommand{\NN}{\field{N}}
        \newcommand{\RR}{\field{R}}
\begin{document}
\title[Semiclassical spectral asymptotics
for a Schr\"odinger operator]{Semiclassical spectral asymptotics for
a two-dimensional magnetic Schr\"odinger operator: The case of
discrete wells}

\author{Bernard Helffer}

\address{D\'epartement de Math\'ematiques, B\^atiment 425, Univ
Paris-Sud et CNRS, F-91405 Orsay C\'edex, France}
\email{Bernard.Helffer@math.u-psud.fr}

\author{Yuri A. Kordyukov }
\address{Institute of Mathematics, Russian Academy of Sciences, 112 Chernyshevsky
str. 450008 Ufa, Russia} \email{yurikor@matem.anrb.ru}

\thanks{Y.K. is partially supported by the Russian Foundation of Basic
Research (grant 09-01-00389).}

\dedicatory{Dedicated to Misha Shubin on the occasion of his 65th
birthday}

\subjclass[2000]{35P20, 35J10, 47A75, 58J50, 81Q10}

\keywords{Spectral theory, Schr\"odinger operators, magnetic fields,
eigenvalue asymptotics}

\begin{abstract}
We consider a magnetic Schr\"odinger operator $H^h$, depending on
the semiclassical parameter $h>0$, on a two-dimensional Riemannian
manifold. We assume that there is no electric field. We suppose that
the minimal value $b_0$ of the magnetic field $b$ is strictly
positive, and there exists a unique minimum point of $b$, which is
non-degenerate. The main result of the paper is a complete
asymptotic expansion for the low-lying eigenvalues of the operator
$H^h$ in the semiclassical limit. We also apply these results to
prove the existence of an arbitrary large number of spectral gaps in
the semiclassical limit in the corresponding periodic setting.
\end{abstract}

 \maketitle

\section{Preliminaries and main results}
Let $ M$ be a compact oriented manifold of dimension $n\geq 2$
(possibly with boundary). Let $g$ be a Riemannian metric and $\bf B$
a real-valued closed 2-form on $M$. Assume that $\bf B$ is exact and
choose a real-valued 1-form $\bf A$ on $M$ such that $d{\bf A} = \bf
B$. Thus, one has a natural mapping
\[
u\mapsto ih\,du+{\bf A}u
\]
from $C^\infty_c(M)$ to the space $\Omega^1_c(M)$ of smooth,
compactly supported one-forms on $M$. The Riemannian metric allows
to define scalar products in these spaces and consider the adjoint
operator
\[
(ih\,d+{\bf A})^* : \Omega^1_c(M)\to C^\infty_c(M).
\]
A Schr\"odinger operator with magnetic potential $\bf A$ is defined
by the formula
\[
H^h = (ih\,d+{\bf A})^* (ih\,d+{\bf A}).
\]
Here $h>0$ is a semiclassical parameter. If $M$ has non-empty
boundary, we will assume that the operator $H^h$ satisfies the
Dirichlet boundary conditions.

We are interested in semiclassical asymptotics of the low-lying
eigenvalues of the operator $H^h$. This problem was studied in
\cite{FoHel1,miniwells,HM,HM01,HelMo3,HelMo4,HelSj7,LuPa2,Mat,MatUeki,Mont,PiFeSt,Ray2,Ray3}
(see \cite{Fournais-Helffer:book,luminy} for surveys).

In this paper, we study the problem in a particular situation. We
come back to the case considered in \cite{HM01}. We suppose that $M$
is two-dimensional. Then we can write ${\bf B}=b dx_g$, where $b\in
C^\infty(M)$ and $dx_g$ is the Riemannian volume form. Let
\[
b_0=\min_{x\in M}b(x).
\]
We furthermore assume that:
\begin{enumerate}
  \item $b_0>0$;
  \item there exist a unique point $x_0$, which belongs to the interior of $M$,
  $k\in \NN$ and $C>0$ such that for all $x$ in some neighborhood
  of $x_0$ the estimates hold:
\[
C^{-1}\, d(x,x_0)^2\leq b(x)-b_0 \leq C \, d(x,x_0)^2\,.
\]
\end{enumerate}

Denote
\[
a={\rm Tr}\left(\frac12 {\rm Hess}\,b(x_0)\right)^{1/2}, \quad
d=\det \left(\frac12 {\rm Hess}\,b(x_0)\right).
\]

Denote by $\lambda_0(H^h)\leq \lambda_1(H^h)\leq \lambda_2(H^h)\leq
\ldots$ the eigenvalues of the operator $H^h$ in $L^2(M)$.

\begin{thm}\label{t:main}
Under current assumptions, for any natural $j$, there exist $C_j>0$
and $h_j>0$ such that, for any $h\in (0,h_j]$,
\[
hb_0 +h^2\left[\frac{2d^{1/2}}{b_0}j+
\frac{a^2}{2b_0}\right]-C_jh^{19/8}\leq \lambda_j(H^h) \leq hb_0
+h^2\left[\frac{2d^{1/2}}{b_0}j+ \frac{a^2}{2b_0}\right]+C_jh^{5/2}.
\]
\end{thm}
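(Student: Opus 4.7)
The plan is to prove the two inequalities by the traditional semiclassical recipe: the upper bound through a family of explicit quasi-modes and the min--max principle, and the lower bound through Agmon-type exponential localisation around $x_0$ combined with a parabolic rescaling $x = \sqrt h\,y$ and a reduction to a model operator of Landau type, whose spectrum supplies the quantisation formula.

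For the upper bound, I would first choose Riemannian normal coordinates centred at $x_0$ and a gauge in which ${\bf A}$ has no constant or quadratic part (the quadratic term of ${\bf A}$ can be gauged away because the linear part of $b - b_0$ vanishes at $x_0$). After the rescaling $x = \sqrt h\,y$, the operator admits a formal expansion
\[
h^{-1}H^h \;=\; H_0(b_0) \;+\; h\,W \;+\; h^{3/2}\,R_{3/2} \;+\; h^2\,R_{2} \;+\;\cdots,
\]
where $H_0(b_0)$ is the constant-field Landau Hamiltonian on $\RR^2$ and $W$ collects the quadratic magnetic perturbation coming from $\frac{1}{2}\langle {\rm Hess}\,b(x_0)\,y,y\rangle$ together with the first metric corrections. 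The lowest Landau band $\cL_0$ has energy $b_0$ and is spanned by the Gaussian ground states; restricted to $\cL_0$, the operator $W$ becomes a Toeplitz operator whose spectrum is precisely the ladder
\[
\Lambda_j \;=\; \frac{2\,d^{1/2}}{b_0}\,j \;+\; \frac{a^{2}}{2\,b_0}, \qquad j = 0, 1, 2, \dots.
\]
I would take the quasi-modes $\phi_j^h$ to be of the form (Toeplitz eigenfunction) tensor (Landau ground state in the transverse direction), undo the rescaling, and cut off smoothly at distance $h^{1/2-\varepsilon}$ from $x_0$. A direct computation then yields $\langle H^h\phi_j^h,\phi_j^h\rangle \le (hb_0 + h^2\Lambda_j + C_j h^{5/2})\,\|\phi_j^h\|^2$, the residual $h^{5/2}$ coming from the neglected term $R_{3/2}$; after Gram--Schmidt, the min--max principle delivers the upper bound.

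The lower bound proceeds in three stages. First, magnetic Agmon estimates show that any eigenfunction $u_h$ with eigenvalue at most $hb_0 + C h^2$ satisfies $\bigl\|e^{\sigma d(x,x_0)/\sqrt h}\,u_h\bigr\|_{L^2} \le C$, so that $u_h$ has super-polynomially small mass outside $\{d(x,x_0) \le h^{1/2 - \varepsilon}\}$. Second, a quadratic IMS partition $\chi_0^2 + \chi_1^2 = 1$ with $\chi_0$ supported in $\{d(x,x_0) \le h^\delta\}$ for some $\delta \in (0,1/2)$ reduces the spectral analysis to $\chi_0 H^h \chi_0$; on $\supp\,\chi_1$, the diamagnetic bound $H^h \ge hb$ together with $b \ge b_0 + C^{-1} h^{2\delta}$ yields contributions exceeding $hb_0 + h^2 \Lambda_j$, while the IMS commutator is killed by the Agmon weight. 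Third, on the inner region and after rescaling, the operator is compared with the model $h\,(H_0(b_0) + hW)$; the first $j+1$ eigenvalues of this model are exactly $hb_0 + h^2 \Lambda_i$, $i \le j$, and a Feshbach--Schur reduction onto $\cL_0$, which exploits the $O(1)$ gap between $\cL_0$ and the next Landau band, transfers the bound to $H^h$ with the announced error $C_j h^{19/8}$.

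The main obstacle is the last step: controlling simultaneously the approximation of $H^h$ by the model, the Feshbach reduction, the IMS commutator, and the cubic Taylor remainder $R_{3/2}$, all with the precision $h^{19/8}$. The exponent $19/8$ reflects the optimal balance between the cut-off scale $h^\delta$ and the order to which the formal expansion of $h^{-1}H^h$ in powers of $\sqrt h$ must be pushed. The book-keeping is delicate because $\cL_0$ is infinite dimensional, which rules out the classical harmonic approximation technique and forces a direct Toeplitz analysis of the effective operator, together with careful exploitation of the explicit Gaussian structure of the Landau ground states and the magnetic Agmon decay.
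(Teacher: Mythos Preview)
Your upper-bound strategy coincides with the paper's: normal coordinates at $x_0$, a suitable gauge, the rescaling $x=\sqrt h\,y$, a formal expansion of $h^{-1}H^h$ in powers of $h^{1/2}$, and projection onto the lowest Landau level, where the effective operator is a one-dimensional harmonic oscillator with spectrum $\Lambda_j$. The paper carries this out explicitly (Section~2) and obtains the $h^{5/2}$ remainder by solving the hierarchy through order~$N=2$.

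Your lower-bound strategy, however, diverges from the paper's, and the paper's route is considerably more direct. You propose magnetic Agmon localisation, an IMS partition, and a Feshbach--Schur reduction onto the infinite-dimensional lowest Landau band, and you concede that extracting the precise error $h^{19/8}$ from this combination is the ``main obstacle''. The paper sidesteps that obstacle entirely. From the pointwise Montgomery bound $H^h\ge hb$ one has, for any $0<\tau<h^{-1}$,
\[
\tau h\,H^h+(1-\tau h)\,hb\;\le\;H^h,
\]
and the choice $\tau=h^{-1/2}$ yields
\[
hb_0+h^{1/2}\,\lambda_j(P^h_D)\;\le\;\lambda_j(H^h_D),\qquad P^h:=H^h-hb+h^{1/2}(b-b_0).
\]
The auxiliary operator $P^h$ carries a \emph{genuine} confining electric potential $h^{1/2}(b-b_0)\sim h^{1/2}|X|^2$, so eigenfunction localisation (the weighted estimates $\||X|^k u\|\le C_k(h^{k/2}+h^{(3k+1)/8})\|u\|$ of Lemma~3.2) is obtained by elementary quadratic-form arguments rather than a magnetic Agmon scheme. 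One then compares $P^h$ with the flat model
\[
P^h_{flat}=\Bigl(ih\partial_x-\tfrac12 b_0 y\Bigr)^2+\Bigl(ih\partial_y+\tfrac12 b_0 x\Bigr)^2-hb_0+h^{1/2}b_2(X)
\]
on $\RR^2$, a constant-field Schr\"odinger operator with quadratic potential whose spectrum is known in closed form. No Feshbach reduction and no Toeplitz analysis are needed: the min--max principle together with the explicit formula for $\lambda_j(P^h_{flat})$ gives $\lambda_j(P^h_D)\ge h^{3/2}\Lambda_j-C_jh^{15/8}$, and inserting this above produces the lower bound with error $h^{1/2}\cdot h^{15/8}=h^{19/8}$. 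In particular, the exponent $19/8$ is not, as you suggest, the outcome of balancing a cut-off scale against a Taylor order; it is simply $1/2+15/8$, where $1/2$ is the cost of the operator inequality introducing $P^h$ and $15/8$ arises from the weighted estimates applied to the cubic remainder of $b$ and the metric corrections.
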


In particular, we have lower and upper bounds for the groundstate
energy $\lambda_0(H^h)$:
\[
hb_0 +h^2\frac{a^2}{2b_0}-C_0h^{19/8}\leq \lambda_0(H^h) \leq hb_0
+h^2\frac{a^2}{2b_0}+C_0h^{5/2}, \quad h\in (0,h_0].
\]
and the asymptotics of the splitting between the groundstate energy
and the first excited state~:
\[
\lambda_1(H^h)- \lambda_0(H^h) \sim h^2 \frac{2d^{\frac 12}}{b_0}\,.
\]

The previous statement can be completed in the following way.

\begin{thm}\label{t:mainbis}
Under current assumptions, for any natural $j$, there exists a
sequence $(\alpha_{j,\ell})_{\ell\in \mathbb N}$, and for  any $N$, there exist $C_{j,N}>0$
and $h_{j,N}>0$ such that, for any $h\in (0,h_{j,N}]$,
\begin{equation}\label{e:mainbis}
|\lambda_j(H^h) - h \sum_{\ell=0}^N \alpha_{j,\ell} h^{\frac \ell
2}|\leq C_{j,N} h^{\frac{N+3}{2}}\,,
\end{equation}
with $\alpha_{j,0} = b_0$, $\alpha_{j,1}=0$, $\alpha_{j,2}=
\frac{2d^{1/2}}{b_0}j+ \frac{a^2}{2b_0}$\,.
\end{thm}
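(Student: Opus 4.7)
The plan is to produce a complete formal BKW expansion of quasi-modes concentrated at $x_0$ to any order in $h^{1/2}$ and then match these against the true low-lying eigenvalues using the spectral gap already supplied by Theorem~\ref{t:main}. In geodesic normal coordinates $y=(y_1,y_2)$ centered at $x_0$, with a gauge for $\mathbf{A}$ adapted to the minimum of $b$, the parabolic rescaling $y=h^{1/2}x$ turns $h^{-1}H^h$ into a formal perturbation
\[
h^{-1}H^h \;\sim\; P_0 + h^{1/2} P_1 + h\, P_2 + h^{3/2} P_3 + \cdots
\]
of the Landau Hamiltonian $P_0$ on $\RR^2$ associated with the constant field $b_0$, where the $P_k$ are polynomial-coefficient second-order differential operators read off from the Taylor expansions of $g$, $\mathbf{A}$ and $b$ at $x_0$. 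The lowest Landau band $\mathcal{L}_0 = \Ker(P_0-b_0)$ is infinite dimensional and is spanned by a Gaussian times holomorphic polynomials.

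I would then seek formal series
\[
\mu_j(h) = b_0 + \sum_{\ell\geq 1} h^{\ell/2}\alpha_{j,\ell}, \qquad u_j(h) = \sum_{\ell\geq 0} h^{\ell/2} u_{j,\ell},
\]
solving $(h^{-1}H^h - \mu_j(h))u_j(h)=0$ order by order in $h^{1/2}$. At zeroth order $u_{j,0}\in\mathcal{L}_0$. The order-$h^{1/2}$ equation yields $\alpha_{j,1}=0$ since $db(x_0)=0$. The solvability condition at order $h$ is that the $\mathcal{L}_0$-projection of $(P_2-\alpha_{j,2})u_{j,0}$ vanish, and this effective operator on $\mathcal{L}_0$ is a one-dimensional harmonic oscillator whose spectrum is exactly $\bigl\{\tfrac{2 d^{1/2}}{b_0}j+\tfrac{a^2}{2 b_0}\bigr\}_{j\in\NN}$; its $j$-th eigenfunction fixes $u_{j,0}$ and produces $\alpha_{j,2}$. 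The subsequent $\alpha_{j,\ell}$ and $u_{j,\ell}$ are determined inductively by the Fredholm alternative for $P_0-b_0$ on $\mathcal{L}_0^\perp$ combined with the successive projections onto $\mathcal{L}_0$. Truncating at order $N$, multiplying by a cutoff supported near $x_0$ and undoing the rescaling produces a quasi-mode $\tilde u_j^{[N]}$ with
\[
\Bigl\|\Bigl(H^h - h\sum_{\ell=0}^N \alpha_{j,\ell}\, h^{\ell/2}\Bigr)\tilde u_j^{[N]}\Bigr\|\leq C_{j,N}\, h^{(N+3)/2}\,\|\tilde u_j^{[N]}\|.
\]

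The upper bound in \eqref{e:mainbis} follows at once from the min-max principle applied to the span of $\tilde u_0^{[N]},\ldots,\tilde u_j^{[N]}$, once the almost-orthogonality of the quasi-modes is checked; this is a consequence of the orthogonality of the Hermite-like components of the $u_{j,0}$. For the matching lower bound, the spectral theorem combined with the above estimate places, for each $j$, a true eigenvalue of $H^h$ inside $\bigl[h\sum\alpha_{j,\ell}h^{\ell/2} - C h^{(N+3)/2},\; h\sum\alpha_{j,\ell}h^{\ell/2} + C h^{(N+3)/2}\bigr]$. For $N\geq 2$ and $h$ small, the $h^2$ spacing supplied by Theorem~\ref{t:main} makes these intervals disjoint for different $j$, and the two-term part of Theorem~\ref{t:main} identifies the captured eigenvalue as $\lambda_j(H^h)$ itself.

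The main technical obstacle is the rigorous implementation of the BKW recursion without losing powers of $h$ in the remainder. At each order one has to (i) verify that the right-hand side is orthogonal to $\mathcal{L}_0$ so that the Fredholm inverse of $P_0-b_0$ on $\mathcal{L}_0^\perp$ applies, (ii) project onto $\mathcal{L}_0$ and extract the next coefficient $\alpha_{j,\ell}$ as the next Taylor coefficient of the eigenvalue of an effective one-dimensional operator, and (iii) control the polynomial growth of $u_{j,\ell}$ in the rescaled variable so that the cutoff error remains $O(h^\infty)$. The non-degeneracy of $x_0$ and the strict positivity of $b_0$ are exactly what guarantee that the effective one-dimensional operator at each step is a polynomially perturbed harmonic oscillator with simple spectrum, thereby allowing the recursion to be carried through to arbitrary order.
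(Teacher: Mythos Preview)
Your approach is correct and is essentially the same as the paper's: construct formal quasi-modes by expanding $h^{-1}H^h$ in powers of $h^{1/2}$ after the parabolic rescaling, solve order by order via the Fredholm alternative for the Landau Hamiltonian, obtain an effective harmonic oscillator on the lowest Landau level giving the $\alpha_{j,2}$, and then match the resulting quasi-eigenvalues with the true $\lambda_j(H^h)$ using the $h^2$-separation from Theorem~\ref{t:main}.

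The only difference is one of presentation. The paper makes the reduction explicit by Fourier transforming in $y_1$ and translating $x_2=x_1-\xi/b_0$, which turns $P_0$ into the \emph{one-dimensional} harmonic oscillator $D_{x_2}^2+b_0^2x_2^2$ with $\xi$ as a spectator variable; the lowest Landau level then becomes $\psi_0(x_2)\chi(\xi)$ and the effective operator is a concrete harmonic oscillator in $\xi$. You instead keep the two-dimensional Landau Hamiltonian and project onto $\mathcal L_0$ abstractly. These are unitarily equivalent descriptions, so the content is identical; the paper's version has the advantage that the recursion and the effective operator can be written down by hand (and it naturally extends to the higher Landau levels $k\geq 1$, used in Section~\ref{s:gaps}), while yours is slightly more conceptual. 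Either way the final matching argument---spectral theorem plus the disjointness of the intervals $I_j$ from Theorem~\ref{t:main}---is the same.
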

This theorem improves the result of \cite{HM01} which only gives  a
two-terms asymptotics for the ground state energy in the flat case.

The scheme of the proof is to first prove the weak version, given by
Theorem~\ref{t:main}, with $N=2$, permitting to determine $j_0$
disjoint intervals in which the first $j_0$ eigenvalues are
localized,  for $h$ small enough, and then to determine a complete
expansion of each eigenvalue lying in a given interval.

The paper is organized as follows. In Section~\ref{s:upper} we
construct approximate eigenfunctions of the operator $H^h$ with any
order of precision. This allows us to prove accurate upper bounds
for the $j$th eigenvalue of $H^h$. In Section~\ref{s:lower} we prove
a lower bound for the $j$th eigenvalue of $H^h$ and complete the
proofs of Theorems~\ref{t:main} and~\ref{t:mainbis}. In
Section~\ref{s:gaps} we consider the case when the magnetic field is
periodic. We combine the construction of approximate eigenfunctions
given in Section~\ref{s:upper} with the results of \cite{diff2006}
to prove the existence of arbitrary large number of gaps in the
spectrum of the periodic operator $H^h$ in the semiclassical limit.

We wish to thank the Erwin Schr\"odinger Institute in Vienna and the
organizers of the conference in honor of 65th birthday of Mikhail
Shubin ``Spectral Theory and Geometric Analysis'' in Boston for
their hospitality and support.

\section{Upper bounds}\label{s:upper}

\subsection{Approximate eigenfunctions: main result}
The purpose of this section is to prove the following accurate upper
bound for the eigenvalues of the operator $H^h$.

\begin{thm}\label{t:qmodes}
Under current assumptions, for any natural $j$ and $k$, there exists
a sequence $(\mu_{j,k,\ell})_{\ell\in \mathbb N}$ with
\[
\mu_{j,k,0} = (2k+1)b_0, \quad \mu_{j,k,1}=0,
\]
and
\[ \mu_{j,k,2}= (2j+1)(2k+1)\frac{d^{1/2}}{b_0}
+(2k^2+2k+1)\frac{t}{2b_0} +\frac{1}{2}(k^2+k) R(x_0)\,,
\]
where $R$ is the scalar curvature, and
\[
t={\rm Tr}\left(\frac12 {\rm Hess}\,b(x_0)\right),
\]
and for any $N$, there exist $\phi^h_{jkN}\in C^\infty(M)$,
$C_{jk,N}>0$ and $h_{jk,N}>0$ such that
\begin{equation}\label{e:orth}
(\phi^h_{j_1k_1N},\phi^h_{j_2k_2N}) =\delta_{j_1j_2}\delta_{k_1k_2}+
\mathcal O_{j_1,j_2,k_1,k_2}(h)
\end{equation}
and, for any $h\in (0,h_{jk,N}]$,
\begin{equation}\label{e:Hh}
\|H^h\phi^h_{jkN}- \mu_{jkN}^h \phi^h_{jkN}\|\leq
C_{jkN}h^{\frac{N+3}{2}}\|\phi^h_{jkN}\|,\,
\end{equation}
where
\[
\mu_{jkN}^h=h\sum_{\ell=0}^N \mu_{j,k,\ell} h^{\frac \ell 2}.
\]
\end{thm}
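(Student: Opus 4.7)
The plan is to perform a formal expansion in half-powers of $h$ after zooming into the unique well at $x_0$, following the Grushin/Born--Oppenheimer reduction adapted to magnetic wells. First I would fix geodesic normal coordinates centered at $x_0$ so that $g_{ij}(x)=\delta_{ij}+\frac13 R_{ikjl}(x_0)x^kx^l+\cO(|x|^3)$ (so the scalar curvature $R(x_0)$ will naturally appear), and choose a gauge for $\bf A$ in which the Taylor expansion of $\bf A$ at $x_0$ is explicit (for instance the transverse gauge, so that the $0$th Taylor coefficient of $\bf A$ vanishes and higher coefficients are controlled by $\nabla^{\alpha} b(x_0)$). In these coordinates I would write $H^h$ in local form, and then rescale $x = h^{1/2}y$: the natural length scale of the ground states is $h^{1/2}$, and after conjugation by this dilation one obtains an operator
\[
\tilde H^h = h\bigl(L_0 + h^{1/2}L_1 + h L_2 + h^{3/2}L_3 + \cdots\bigr)
\]
on $\RR^2_y$, where $L_0=(-i\partial_y + \mathbf A_0(y))^2$ is a Landau Hamiltonian on $\RR^2$ with \emph{constant} magnetic field $b_0$.

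Next I would make the quasimode ansatz $u^h = \sum_{\ell\geq 0} h^{\ell/2} u_\ell(y)$ and $\mu^h/h = \sum_{\ell\geq 0} h^{\ell/2}\mu_\ell$ with $\mu_0=(2k+1)b_0$, and solve the cascade $(L_0-\mu_0)u_\ell = -\sum_{m=1}^\ell\bigl(L_m - \mu_m\bigr)u_{\ell-m}$ order by order. The spectrum of $L_0$ is the set of Landau levels $(2k+1)b_0$, each of infinite multiplicity, with eigenspace $\mathcal E_k$. Setting $u_0\in\mathcal E_k$ and projecting the order-$\ell$ equation onto $\mathcal E_k$ gives the Fredholm solvability condition that determines $\mu_\ell$; projecting onto $\mathcal E_k^\perp$ then inverts $(L_0-\mu_0)$ there and determines $u_\ell$ up to an element of $\mathcal E_k$. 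The first-order constant $\mu_1$ vanishes by parity (the coefficients of $L_1$ are odd polynomials in $y$, while the Landau projector preserves the parity of Hermite-type functions).

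The main obstacle is the second-order step, which must produce the stated formula
\[
\mu_2 = (2j+1)(2k+1)\frac{d^{1/2}}{b_0}+(2k^2+2k+1)\frac{t}{2b_0}+\tfrac12(k^2+k)R(x_0).
\]
Here $L_2$ contains three contributions: the quadratic part of $b$ (encoded by Hess$\,b(x_0)$), the curvature correction to the metric, and the Kato-type contribution coming from $L_1 (L_0-\mu_0)^{-1}_{\text{off}} L_1$ via second-order perturbation theory. The compression of $L_2 + L_1(L_0-\mu_0)^{-1}_{\text{off}}L_1$ to $\mathcal E_k$ must be computed explicitly; using the symplectic structure of $\mathcal E_k$ (complex coordinates / creation-annihilation operators on the Landau level) it reduces to a one-dimensional harmonic oscillator on $\mathcal E_k$, whose frequency is governed by $\det(\tfrac12{\rm Hess}\,b(x_0))^{1/2}=d^{1/2}$ and gives the $(2j+1)d^{1/2}/b_0$ term. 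The trace term $t/(2b_0)$ and the curvature term come from diagonal contributions on $\mathcal E_k$; verifying the precise coefficients $(2k+1)$, $(2k^2+2k+1)$ and $\tfrac12(k^2+k)$ is a bookkeeping matter involving matrix elements of quadratic monomials between Landau eigenstates.

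Once all $\mu_\ell$ and $u_\ell$ are constructed for $\ell\leq N$, I would set $u^h_{jkN}=\sum_{\ell=0}^{N} h^{\ell/2} u_\ell$, and define $\phi^h_{jkN}$ by multiplying by a cutoff $\chi(x)$ supported in a fixed small neighborhood of $x_0$, undoing the dilation $y = h^{-1/2}x$, and multiplying by a local gauge factor so that the result lies in $C^\infty(M)$ (extended by zero). Since the Gaussian decay of Landau-type eigenfunctions in $y$ translates into decay at scale $h^{1/2}$ in $x$, the commutators with $\chi$ contribute $\cO(h^\infty)\|\phi^h_{jkN}\|$, so the quasimode estimate \eqref{e:Hh} follows from the construction. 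The near-orthogonality \eqref{e:orth} is inherited from the fact that the leading profiles $u_0$, indexed by the pair $(j,k)$, form an orthogonal family in $L^2(\RR^2)$ (different Landau levels are orthogonal under $L_0$, and within a fixed level the Hermite-type index $j$ diagonalizes the reduced oscillator), the $\cO(h)$ error coming from the higher-order corrections $u_\ell$.
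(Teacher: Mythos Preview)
Your strategy is essentially the paper's: localize at $x_0$ in normal coordinates, conjugate by $|g|^{1/4}$, rescale by $h^{1/2}$, expand the resulting operator in half-powers of $h$, and solve the cascade with $L_0$ a constant-field Landau Hamiltonian, projecting onto the $k$th Landau level to obtain an effective one-dimensional harmonic oscillator there whose $j$th eigenvalue gives $\mu_{j,k,2}$. The explicit formula for $\mu_{j,k,2}$ is obtained in the paper by a long lemma computing matrix elements against Hermite functions, which is precisely the ``bookkeeping matter'' you allude to.

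Two small points of divergence are worth flagging. First, the paper does not work with abstract Landau projections: it chooses the gauge $A_1=0$, $A_2=b_0x+\cO(|x|^3)$ (not the transverse gauge), then Fourier transforms in $y$ and translates $x_2=x_1-\xi/b_0$, so that $L_0$ becomes the \emph{one-dimensional} oscillator $D_{x_2}^2+b_0^2x_2^2$ with $\xi$ as a silent parameter. Projection onto the $k$th Landau level is then literally integration against $\psi_k(x_2)$, and the effective operator $\cH_k$ in $\xi$ falls out by a direct computation. Your symplectic-reduction picture on $\mathcal E_k$ is equivalent (the identification $\mathcal E_k\cong L^2(\RR_\xi)$ via $\chi\mapsto\psi_k(x_2)\chi(\xi)$ is exactly this), but the paper's concrete route is what makes the coefficients $(2k+1)$, $(2k^2+2k+1)$, $\tfrac12(k^2+k)$ tractable to extract.

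Second, in either gauge one actually has $L_1=0$ identically: since $\nabla b(x_0)=0$ and $g=\mathrm{Id}+\cO(|X|^2)$ in normal coordinates, $B=b\sqrt{|g|}$ has no linear term, so $A$ has no quadratic term, and the metric/Christoffel corrections also start at order two. Hence your parity argument for $\mu_{j,k,1}=0$ is correct but unnecessary, and the Kato-type contribution $L_1(L_0-\mu_0)^{-1}_{\mathrm{off}}L_1$ you list among the second-order terms is in fact absent. All of $\mu_{j,k,2}$ comes from compressing $L_2$ alone (the cubic part of $A$ encoding ${\rm Hess}\,b(x_0)$, the curvature corrections $g^{\alpha\beta}_{(2)}$ and the $|g|^{1/4}$-conjugation term, and the linear part of $\Gamma^\alpha$) to the $k$th level.
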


Since the operator $H^h$ is self-adjoint, using Spectral Theorem, we
immediately deduce the existence of eigenvalues near the points
$\mu_{jk}^h$.

\begin{cor}\label{c:dist}
For any natural $j$, $k$ and $N$, there exist $C_{jk,N}>0$ and
$h_{jk,N}>0$ such that, for any $h\in (0,h_{jk,N})$,
\[
{\rm dist}(\mu_{jkN}^h, {\rm Spec}(H^h))\leq
C_{jk,N}h^{\frac{N+3}{2}}.
\]
\end{cor}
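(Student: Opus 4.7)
The proof is a direct application of the spectral theorem to the quasimode estimate in Theorem~\ref{t:qmodes}. The plan is to exploit the elementary fact that, for a self-adjoint operator $H$ on a Hilbert space and any nonzero $\phi$ in its domain,
\[
\operatorname{dist}(\mu,\operatorname{Spec}(H)) \le \frac{\|(H-\mu)\phi\|}{\|\phi\|},
\]
which follows from the spectral decomposition $H=\int \lambda\, dE_\lambda$ via
\[
\|(H-\mu)\phi\|^2=\int(\lambda-\mu)^2\, d\langle E_\lambda\phi,\phi\rangle \ge \operatorname{dist}(\mu,\operatorname{Spec}(H))^2\,\|\phi\|^2.
\]

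First I would verify that the quasimode is nonzero for small $h$. Applying \eqref{e:orth} with $j_1=j_2=j$ and $k_1=k_2=k$ gives $\|\phi^h_{jkN}\|^2=1+\mathcal O_{j,k}(h)$, so there exists $h'_{jk,N}>0$ with $\|\phi^h_{jkN}\|\ge 1/2$ for all $h\in(0,h'_{jk,N}]$; in particular $\phi^h_{jkN}\ne 0$.

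Next I would apply the spectral inequality above to $H^h$, which is self-adjoint on $L^2(M)$ (with Dirichlet conditions if $\partial M\neq\emptyset$), with $\mu=\mu_{jkN}^h$ and $\phi=\phi^h_{jkN}$. The quasimode bound \eqref{e:Hh} then yields, for $h\in(0,\min(h_{jk,N},h'_{jk,N})]$,
\[
\operatorname{dist}(\mu_{jkN}^h,\operatorname{Spec}(H^h))\le \frac{\|H^h\phi^h_{jkN}-\mu_{jkN}^h\phi^h_{jkN}\|}{\|\phi^h_{jkN}\|} \le C_{jkN}h^{(N+3)/2}.
\]
Renaming the constants and the threshold if necessary gives the stated inequality, completing the argument. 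There is no substantive obstacle here: the only points of care are the nonvanishing of $\phi^h_{jkN}$ (handled by \eqref{e:orth}) and the self-adjointness of $H^h$, both of which are already in place.
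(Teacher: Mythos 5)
Your argument is correct and is exactly the one the paper intends: the paper disposes of this corollary in one line by invoking self-adjointness and the Spectral Theorem, and your write-up simply spells out that standard quasimode inequality together with the nonvanishing of $\phi^h_{jkN}$ from \eqref{e:orth}. No discrepancy.
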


\begin{rem}
The low-lying eigenvalues of the operator $H^h$, as $h\rightarrow
0$, are obtained by taking $k=0$ in Theorem~\ref{t:qmodes}.
Therefore, as an immediate consequence of Theorem~\ref{t:qmodes}, we
deduce that, for any natural $j$ and $N$, there exists $h_{j,N}>0$
such that, for any $h\in (0,h_{j,N}]$, we have
\[
\lambda_j(H^h)\leq \mu_{j0N}^h + C_{j0,N}h^{\frac{N+3}{2}}.
\]
In particular, this implies the upper bound in Theorem~\ref{t:main}.
\end{rem}

\begin{rem}
Our interest in the case of arbitrary $k$ in Theorem~\ref{t:qmodes}
is motivated by its importance for proving the existence of gaps in
the spectrum of the operator $H^h$ in the semiclassical limit. This
will be discussed in Section~\ref{s:gaps}.
\end{rem}

\begin{proof}[Proof of Theorem \ref{t:qmodes}]
The proof is long, so we will split it in different steps in the
next subsections.
\end{proof}

\subsection{Expanding operators in fractional powers of $h$}
The approximate eigenfunctions $\phi^h_{jk}\in C^\infty(M)$, which
we are going to construct, will be supported in a small neighborhood
of $x_0$. So, in a neighborhood of $x_0$, we will consider some
special local coordinate system with coordinates $(x,y)$ such that
$x_0$ corresponds to $(0,0)$. We will only apply our operator on
functions which are a product of cut-off functions with functions of
the form of linear combinations of terms like $h^\nu
w(h^{-1/2}x,h^{-1/2}y)$ with $w$ in $\cS(\RR^2)$. These functions
are consequently $O(h^\infty)$ outside a fixed neighborhood of
$(0,0)$. We will start by doing the computations formally in the
sense that everything is determined modulo $O(h^\infty)$, and any
smooth function will be replaced by its Taylor's expansion. It is
then easy to construct non formal approximate eigenfunctions.

First, we recall that in local coordinates $X=(X^1,X^2)=(x,y)$ on
$M$ the 1-form $\bf A$ is written as
\[
{\bf A}=A_1(X)\,dX_1+A_2(X)\,dX_2,
\]
the matrix of the Riemannian metric $g$ as
\[
g(X)=(g_{j\ell}(X))_{1\leq j,\ell\leq 2}
\]
and its inverse as
\[
g(X)^{-1}=(g^{j\ell}(X))_{1\leq j,\ell\leq 2}.
\]
Denote $|g(X)|=\det(g(X))$. Then the magnetic field $\bf B$ is given
by
\[
{\bf B}=B\,dx\wedge dy, \quad B =\frac{\partial A_2}{\partial
x}-\frac{\partial A_1}{\partial y},
\]
and
\[
B=b\sqrt{|g|}.
\]
Finally, the operator $H^h$ has the form
\[
H^h=\frac{1}{\sqrt{|g(X)|}}\sum_{1\leq \alpha,\beta\leq
2}\nabla^h_\alpha \left(\sqrt{|g(X)|} g^{\alpha\beta}(X)
\nabla^h_\beta\right),
\]
where
\[
\nabla^h_\alpha= i h \frac{\partial}{\partial X^\alpha}+A_\alpha(X),
\quad \alpha =1,2,
\]
or, equivalently,
\[
H^h=\sum_{1\leq \alpha,\beta\leq 2} g^{\alpha\beta}(X)
\nabla^h_\alpha\nabla^h_\beta+i h \sum_{1\leq \alpha\leq
2}\Gamma^\alpha \nabla^h_\alpha,
\]
where, for $\alpha =1,2$,
\begin{equation}\label{e:Gamma}
\Gamma^\alpha=\frac{1}{\sqrt{|g(X)|}}\sum_{1\leq \beta\leq
2}\frac{\partial}{\partial X^\beta} \left(\sqrt{|g(X)|}
g^{\beta\alpha}(X)\right).
\end{equation}

We will consider normal local coordinates near $x_0$ such that $x_0$
corresponds to $(0,0)$ and, in a neighborhood of $x_0$,
\[
b(X)=b_0+\alpha_1 x^2+\beta_1 y^2+O(|X|^3).
\]
Thus, we have
\[
a=(\alpha_1)^{1/2}+(\beta_1)^{1/2}, \quad d=\alpha_1\beta_1, \quad
t=\alpha_1+\beta_1.
\]

By well-known properties of normal coordinates we have
\[
g_{11}(X)=1+O(|X|^2),\quad g_{12}(X)=O(|X|^2),\quad
g_{22}(X)=1+O(|X|^2).
\]
Moreover (see, for instance, \cite[Proposition 1.28]{BGV92}), we
have
\[
g_{ij}(X)=\delta_{ij}-\frac{1}{3}\sum_{kl}R_{ikjl}(x_0)X^kX^l+O(|X|^3),
\]
where $R_{ijkl}$ is the Riemann curvature tensor. Therefore,
Taylor's expansion of $g^{\alpha\beta}$ has the form
\begin{equation}\label{e:expg}
g^{\alpha\beta}(X)=\delta^{\alpha\beta}+\sum_{k=2}^\infty
g^{\alpha\beta}_{(k)}(X),
\end{equation}
where
\[
g^{\alpha\beta}_{(2)}(X)=\frac{1}{3}\sum_{kl}R_{\alpha k\beta
l}(x_0) X^kX^l.
\]
In the two-dimensional case, due to its symmetries, the Riemann
curvature tensor is determined by a single component
\[
R_{1212}=-R_{2112}=R_{2121}=-R_{1221}.
\]
The other components equal zero. We have
\[
2R_{1212}=R(g_{11}g_{22}-g^2_{12}),
\]
where $R$ is the scalar curvature. So we have
\[
R_{1212}(x_0)=\frac{1}{2}R(x_0).
\]

Thus we have
\begin{equation}\label{e:g2}
g^{11}_{(2)}(X)=\frac{1}{6} R(x_0) y^2, \quad g^{1
2}_{(2)}(X)=-\frac{1}{6} R(x_0) x y, \quad
g^{22}_{(2)}(X)=\frac{1}{6} R(x_0) x^2.
\end{equation}

We also have
\begin{equation}\label{e:detg}
\sqrt{|g(X)|}=1-\frac{1}{12} R(x_0) x^2-\frac{1}{12} R(x_0)
y^2+O(|X|^3).
\end{equation}

Let us write Taylor's expansion of $\Gamma^\alpha$ in the form
\begin{equation}\label{e:expgamma}
\Gamma^\alpha(X)=\sum_{k=0}^\infty\Gamma^{\alpha}_{(k)}(X), \quad
\alpha=1,2.
\end{equation}
Using \eqref{e:g2} and \eqref{e:detg}, one can show that
\[
\Gamma^{\alpha}_{(0)}(X)=0, \quad \alpha=1,2,
\]
and
\begin{equation}\label{e:gamma1}
\Gamma^{1}_{(1)}(X)=-\frac{1}{3}R(x_0)x\,,\quad
\Gamma^{2}_{(1)}(X)=-\frac{1}{3}R(x_0)y\,.
\end{equation}

If we write ${\bf B}=B(x,y)\,dx\,dy$ then
\[
B(X)=b(X)\sqrt{|g(X)|}=b_0+\alpha x^2+\beta y^2+O(|X|^3),
\]
where
\[
\alpha_1=\alpha+\frac{1}{12}b_0 R(x_0)>0, \quad
\beta_1=\beta+\frac{1}{12}b_0 R(x_0)>0.
\]
We can also choose a gauge $A$ such that
\[
A_1(X)=0\ \text{and}\ A_2(X)=b_0x+\frac{\alpha}{3}x^3+\beta
xy^2+O(|X|^4).
\]
We expand $A_2$ into the Taylor series:
\[
A_2(X)=b_0x+\sum_{j=3}^\infty S_j(x,y),
\]
with
\[
S_j(x,y)=\sum_{\ell=0}^j S_{j\ell}x^\ell y^{j-\ell}.
\]
In particular, we have
\[
S_3(x,y)=\frac{\alpha}{3}x^3+\beta xy^2.
\]

Next we move the operator $H^h$ into the Hilbert space $L^2(\RR^n)$
equipped with the Euclidean inner product, considering the operator
\begin{align*}
\hat{H}^h& =|g(X)|^{1/4}H_h |g(X)|^{-1/4}\\
& =\sum_{1\leq \alpha,\beta\leq 2} g^{\alpha\beta}(X)
\hat{\nabla}^h_\alpha\hat{\nabla}^h_\beta+i h \sum_{1\leq \alpha\leq
2}\Gamma^\alpha \hat{\nabla}^h_\alpha,
\end{align*}
where, for $\alpha =1,2$,
\begin{equation}\label{e:nabla}
\hat{\nabla}^h_\alpha=|g(X)|^{1/4} \nabla^h_\alpha |g(X)|^{-1/4} =
\nabla^h_\alpha + \frac{1}{12}i h R(x_0) X^\alpha+O(h|X|^2)).
\end{equation}

Now we use the scaling $x=h^{1/2}x_1, y=h^{1/2}y_1$ and expand the
resulting operator $\hat{H}^h(x_1,y_1,D_{x_1},D_{y_1})$ into a
formal power series of $h^{1/2}$. By \eqref{e:nabla}, we have
\begin{align*}
\hat{\nabla}^h_1& = h^{1/2}(-D_{x_1} + \frac{1}{12}i h
R(x_0)x_1+O(h^{3/2})),\\
\hat{\nabla}^h_2 &= h^{1/2}(-D_{y_1} +b_0x_1+ \frac{1}{12}i h
R(x_0)y_1+hS_3(x_1,y_1)+O(h^{3/2})).
\end{align*}
From \eqref{e:expg} and \eqref{e:expgamma}, we get
\begin{align*}
g^{\alpha\beta}(x_1,y_1)&=\delta^{\alpha\beta}+hg^{\alpha\beta}_{(2)}(x_1,y_1)
+\sum_{k=3}^\infty h^{k/2}g^{\alpha\beta}_{(k)}(x_1,y_1),\\
\Gamma^\alpha(x_1,y_1)&=h^{1/2}
\Gamma^{\alpha}_{(1)}(x_1,y_1)+\sum_{k=2}^\infty h^{k/2}
\Gamma^{\alpha}_{(k)}(x_1,y_1), \quad \alpha=1,2.
\end{align*}
Using these expansions, one can check that the operator $\hat{H}^h$
has the form
\[
\hat{H}^h(x_1,y_1,D_{x_1},D_{y_1})=h Q^h(x_1,y_1,D_{x_1},D_{y_1}),
\]
with
\[
Q^h(x_1,y_1,D_{x_1},D_{y_1})=\sum_{k=0}^\infty
h^{k/2}Q_k(x_1,y_1,D_{x_1},D_{y_1}),
\]
where
\begin{align*}
Q_0(x_1,y_1,D_{x_1},D_{y_1})&=D_{x_1}^2 +(D_{y_1}-b_0x_1)^2,\\
Q_1(x_1,y_1,D_{x_1},D_{y_1})&=0,
\end{align*}
and
\begin{align*}
Q_2(x_1,y_1,D_{x_1},D_{y_1}) = & -\frac{1}{12}iR(x_0)
(x_1D_{x_1}+D_{x_1}x_1)\\
& -\frac{1}{12}iR(x_0) (y_1(D_{y_1}-b_0x_1)+(D_{y_1}-b_0x_1)y_1)
\\ & -((D_{y_1}-b_0x_1)S_3(x_1,y_1)+S_3(x_1,y_1)(D_{y_1}-b_0x_1))\\
& + g^{11}_{(2)}(x_1,y_1)
D^2_{x_1}\\
& + g^{12}_{(2)}(x_1,y_1)\,  D_{x_1}\, (D_{y_1}-b_0x_1)
\\ & + g^{12}_{(2)}(x_1,y_1) \,
(D_{y_1}-b_0x_1)\, D_{x_1}\\
& + g^{22}_{(2)}(x_1,y_1)
(D_{y_1}-b_0x_1)^2\\
& -i \Gamma^{1}_{(1)}(x_1,y_1)D_{x_1}\\
& -i \Gamma^{2}_{(1)}(x_1,y_1) (D_{y_1}-b_0x_1).
\end{align*}
Using the Fourier transform in $y_1$, we can write the operator
$Q^h$ as
\[
Q^h(x_1,-D_\xi,D_{x_1},\xi)=\sum_{k=0}^\infty
h^{k/2}Q_k(x_1,-D_\xi,D_{x_1},\xi),
\]
where
\begin{align*}
Q_0(x_1,-D_\xi,D_{x_1},\xi)&=D_{x_1}^2 +(\xi-b_0x_1)^2,\\
Q_1(x_1,-D_\xi,D_{x_1},\xi)&=0,
\end{align*}
and
\begin{align*}
Q_2(x_1,-D_\xi,D_{x_1},\xi) = & -\frac{1}{12}iR(x_0)
(2x_1D_{x_1}-i)\\
& +\frac{1}{12}iR(x_0) (2(\xi-b_0x_1) D_{\xi}-i)
\\ & -((\xi-b_0x_1) S_3(x_1,-D_\xi)+S_3(x_1,-D_\xi)(\xi-b_0x_1))\\
& + g^{11}_{(2)}(x_1,-D_\xi)
D^2_{x_1}\\
& + g^{12}_{(2)}(x_1,-D_\xi)\, D_{x_1}\, (\xi-b_0x_1)
\\ & + g^{12}_{(2)}(x_1,-D_\xi) \,
(\xi-b_0x_1)\, D_{x_1}\\
& + g^{22}_{(2)}(x_1,-D_\xi)
(\xi-b_0x_1)^2\\
& -i \Gamma^{1}_{(1)}(x_1,-D_\xi)D_{x_1}\\
& -i \Gamma^{2}_{(1)}(x_1,-D_\xi) (\xi-b_0x_1).
\end{align*}
A further translation $x_2=x_1-\frac{\xi}{b_0}$ gives
\[
\hat{H}^h=h T^h(x_2,\xi,D_{x_2},D_{\xi}),
\]
where
\[
T^h(x_2,\xi,D_{x_2},D_{\xi})=Q^h\left(x_2+\frac{\xi}{b_0},-D_\xi+\frac{1}{b_0}D_{x_2},D_{x_2},\xi\right).
\]
We have (denoting $w=(x_2,\xi)$)
\[
T^h(w,D_{w})=T_0(x_2,D_{x_2})+h T_2(w,D_{w})+\sum_{j=3}^\infty
h^{j/2}T_j(w,D_{w})
\]
with
\[
T_0(x_2,D_{x_2})=D^2_{x_2}+ b_0^2x_2^2,
\]
and
\begin{align*}
T_2(w,D_{w}) = & -\frac{1}{12}iR(x_0)
(2\left(x_2+\frac{\xi}{b_0}\right)D_{x_2}-i)\\
& +\frac{1}{12}iR(x_0) (2b_0x_2\left(
-D_\xi+\frac{1}{b_0}D_{x_2}\right)+i)
\\ & +(b_0x_2 \hat{S}_3(x_2,\xi,D_{x_2},D_\xi)+\hat{S}_3(x_2,\xi,D_{x_2},D_\xi)b_0x_2)\\
& + g^{11}_{(2)}(x_2+\frac{\xi}{b_0},-D_\xi+\frac{1}{b_0}D_{x_2})
D^2_{x_2}\\
& - g^{12}_{(2)}(x_2+\frac{\xi}{b_0},-D_\xi+\frac{1}{b_0}D_{x_2})\,
D_{x_2}\,(b_0x_2)
\\ & - g^{12}_{(2)}(x_2+\frac{\xi}{b_0},-D_\xi+\frac{1}{b_0}D_{x_2}) \,
b_0x_2\, D_{x_2}\\
& + g^{22}_{(2)}(x_2+\frac{\xi}{b_0},-D_\xi+\frac{1}{b_0}D_{x_2})
b_0^2x_2^2\\
& -i \Gamma^{1}_{(1)}(x_2+\frac{\xi}{b_0},-D_\xi+\frac{1}{b_0}D_{x_2})D_{x_2}\\
& +i
\Gamma^{2}_{(1)}(x_2+\frac{\xi}{b_0},-D_\xi+\frac{1}{b_0}D_{x_2})
b_0x_2,
\end{align*}
where
\[
\hat{S}_3(x_2,\xi,D_{x_2},D_\xi)=S_3\left(x_2+\frac{\xi}{b_0},-D_\xi+\frac{1}{b_0}D_{x_2}\right).
\]
The operator $\hat{S}_3$ has the following form:
\begin{multline*}
\hat{S}_3(x_2,\xi,D_{x_2},D_\xi)=x_2L(\xi,D_\xi)+M_0(\xi,D_\xi)+M_1(x_2,D_{x_2})
\\ +M_2(x_2,D_{x_2},D_\xi)+
M_3(x_2,D_{x_2},D_\xi)+M_4(x_2,\xi,D_{x_2}),
\end{multline*}
where
\begin{align*}
L(\xi,D_\xi) & = \frac{\alpha}{b^2_0}\xi^2+\beta D^2_\xi, \\
M_0(\xi,D_\xi) & = \frac{\alpha}{3b^3_0}\xi^3+\frac{\beta}{b_0}\xi D^2_\xi, \\
M_1(x_2,D_{x_2}) & = \frac{\alpha}{3}x_2^3+\frac{\beta}{b_0^2}x_2D_{x_2}^2, \\
M_2(x_2,D_{x_2},D_\xi) & = -2\frac{\beta}{b_0^2}D_{x_2}\xi D_\xi, \\
M_3(x_2,D_{x_2},D_\xi) & = -2\frac{\beta}{b_0}x_2 D_{x_2} D_\xi, \\
M_4(x_2,\xi,D_{x_2})& =
\frac{\alpha}{b_0}x^2_2\xi+\frac{\beta}{b_0^3}\xi D_{x_2}^2.
\end{align*}
So we get
\begin{align*}
T_2(w,D_{w})= & -\frac{1}{6}i R(x_0)\frac{\xi}{b_0}D_{x_2}-\frac{1}{6}i R(x_0)b_0x_2 D_\xi \\
& + 2b_0x_2^2L(\xi,D_\xi)+ b_0 \left(x_2 M(w,D_w)+M(w,D_w)x_2)\right)\\
& +
g^{11}_{(2)}\left(x_2+\frac{\xi}{b_0},-D_\xi+\frac{1}{b_0}D_{x_2}\right)
D^2_{x_2}\\
& -
g^{12}_{(2)}\left(x_2+\frac{\xi}{b_0},-D_\xi+\frac{1}{b_0}D_{x_2}\right)
\Big( D_{x_2} (b_0x_2) + b_0x_2D_{x_2}\Big)\\
& +
g^{22}_{(2)}\left(x_2+\frac{\xi}{b_0},-D_\xi+\frac{1}{b_0}D_{x_2}\right)
b^2_0x^2_2\\
& -i \Gamma^{1}_{(1)}\left(x_2+\frac{\xi}{b_0},-D_\xi+\frac{1}{b_0}D_{x_2}\right)D_{x_2}\\
& +i \Gamma^{2}_{(1)}
\left(x_2+\frac{\xi}{b_0},-D_\xi+\frac{1}{b_0}D_{x_2}\right) b_0x_2,
\end{align*}
where
\[
M(w,D_w)=\sum_{\ell=0}^3M_\ell(w,D_w).
\]

We have
\[
{\rm Sp}(T_0(x_2,D_{x_2}))=\{\mu_{k}=(2k+1)b_0: k\in \NN\}.
\]
The eigenfunction of $T_0(x_2,D_{x_2})$ associated to the eigenvalue
$\mu_{k}$ is
\[
\psi_{k}(x_2)=\pi^{-1/4}b_0^{1/2} H_{k}(b_0^{1/2}x_2)
e^{-b_0x^2_2/2},
\]
where $H_k$ is the Hermite polynomial:
\[
H_k(x)=(-1)^ke^{x^2}\frac{d^k}{dx^k}(e^{-x^2}).
\]
The norm of $\psi_{k}$ in $L^2(\RR,dx)$ equals the norm of $H_k$ in
$L^2(\RR,e^{-x^2}\,dx)$, which is given by
\[
\|H_k\|=\sqrt{2^kk!\sqrt{k}}.
\]

\subsection{Construction of approximate eigenfunctions}
First, we construct a formal eigenfunction $u^h$ of the operator
$T^h(w,D_{w})$ admitting an asymptotic expansion in the form of a
formal asymptotic series in powers of $h^{1/2}$
\[
u^h=\sum_{\ell=0}^\infty u^{(\ell)}h^{\ell/2}, \quad u^{(\ell)}\in
{\cS(\RR^2)},
\]
with the corresponding formal eigenvalue
\[
\lambda^h=\sum_{\ell=0}^\infty\lambda^{(\ell)}h^{\ell/2},
\]
such that
\[
T^h(w,D_{w})u^h-\lambda^h u^h=0
\]
in the sense of asymptotic series in powers of $h^{1/2}$.
\medskip\par
{\bf The first terms.} Looking at the coefficient of $h^0$,  we
obtain:
\[
T_0(x_2,D_{x_2})u^{(0)}=\lambda^{(0)}u^{(0)}\,.
\]
Thus, we have
\begin{equation}\label{e:0}
\lambda^{(0)}=\lambda^{(0)}_k=(2k+1)b_0, \quad
u^{(0)}(x_2,\xi)=\frac{1}{\|H_k\|}\psi_{k}(x_2)\chi_0(\xi), \quad
k\in\NN,
\end{equation}
where $\chi_0$ is some function, which will be determined later, and
$c$ is some constant.

Looking at the coefficient of $h^{1/2}$,  we obtain:
\[
T_0(x_2,D_{x_2})u^{(1)}=\lambda^{(0)}u^{(1)}+\lambda^{(1)}u^{(0)}.
\]
The orthogonality condition implies that
\[
\lambda^{(1)}=0.
\]
Under this condition, we get
\begin{equation}\label{e:1/2}
u^{(1)}(x_2,\xi)=\frac{1}{\|H_k\|}\psi_{k}(x_2)\chi_1(\xi),
\end{equation}
where $\chi_1$ is some function, which will be determined later.

Next, the cancelation of the coefficient of $h^1$ gives:
\begin{equation}\label{e:1}
( T_0(x_2,D_{x_2})-\lambda^{(0)})u^{(2)}= \lambda^{(2)}u^{(0)} -
T_2(w,D_{w})u^{(0)}.
\end{equation}
The orthogonality condition implies that
\[
\lambda^{(2)}\chi_0(\xi)-\frac{1}{\|H_k\|}\int T_2(w,D_{w})u^{(0)}
\psi_{k}(x_2) dx_2=0.
\]

\begin{lem}
For any function $u$ of the form
$u(x_2,\xi)=\frac{1}{\|H_k\|}\psi_{k}(x_2)\chi(\xi)$, we have
\[
\frac{1}{\|H_k\|}\int T_2(w,D_{w})u(w) \psi_{k}(x_2) dx_2 =
\cH_k\chi(\xi),
\]
where $\cH_k$ is the harmonic oscillator:
\begin{align*}
\cH_k=& (2k+1)\beta_1D^2_\xi + (2k+1)\alpha_1\frac{1}{b^2_0} \xi^2\\
& +\frac{1}{2b_0} (2k^2+2k+1)\left(\alpha_1+\beta_1+\frac{1}{2} b_0
R(x_0)\right)-\frac{1}{4}R(x_0).
\end{align*}
\end{lem}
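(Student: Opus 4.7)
The plan is to reduce the statement to a direct computation: apply the operator $T_2(w,D_w)$ term by term to $u(x_2,\xi)=\frac{1}{\|H_k\|}\psi_k(x_2)\chi(\xi)$, and then take the $L^2_{x_2}$-pairing with $\frac{1}{\|H_k\|}\psi_k$. What remains is an operator in $(\xi,D_\xi)$ acting on $\chi$, which we must match with the $\cH_k$ stated in the lemma. The workhorse is the set of standard expectation values for the harmonic oscillator $T_0=D_{x_2}^2+b_0^2 x_2^2$, namely
\[
\frac{\langle \psi_k, x_2^2 \psi_k\rangle}{\|H_k\|^2}=\frac{2k+1}{2b_0},\quad \frac{\langle \psi_k, D_{x_2}^2\psi_k\rangle}{\|H_k\|^2}=\frac{b_0(2k+1)}{2},\quad \frac{\langle \psi_k, x_2^4\psi_k\rangle}{\|H_k\|^2}=\frac{3(2k^2+2k+1)}{4b_0^2},
\]
together with the parity principle: any expectation of a monomial in $x_2,D_{x_2}$ with odd total degree vanishes (after writing it in Weyl-symmetric form, using $[x_2,D_{x_2}]=-i$).

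Parity immediately kills several contributions from the explicit formula for $T_2$: the two curvature terms $-\frac{i}{6}R(x_0)\frac{\xi}{b_0}D_{x_2}$ and $-\frac{i}{6}R(x_0)b_0 x_2 D_\xi$ both project to $0$, as does $b_0(x_2 M_0+M_0 x_2)=2b_0 x_2 M_0$ (since $M_0$ depends only on $\xi,D_\xi$), and the $\Gamma^\alpha_{(1)}$-terms contribute only through their constant-in-$(x_2,D_{x_2})$ parts after expansion. The term $2b_0 x_2^2 L(\xi,D_\xi)$ projects to $(2k+1)L(\xi,D_\xi)=(2k+1)\bigl[\frac{\alpha}{b_0^2}\xi^2+\beta D_\xi^2\bigr]$, which is exactly the leading symbol of $\cH_k$ (before the $\alpha\to\alpha_1$, $\beta\to\beta_1$ shift). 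The surviving $M_1,M_2,M_3$ pieces and the $g^{\alpha\beta}_{(2)}$-pieces produce only scalar (i.e.\ $(\xi,D_\xi)$-independent) contributions upon projection, since each remaining monomial has the form $(\text{function of }\xi,D_\xi)\times(\text{even polynomial in }x_2,D_{x_2})$ of total $x_2,D_{x_2}$-degree $2$ or $4$.

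The next step is to collect all these scalar contributions. The $M_1$ piece contributes $\tfrac{2\alpha b_0}{3}\langle x_2^4\rangle_k+\tfrac{\beta}{b_0}\langle x_2^2 D_{x_2}^2+D_{x_2}^2 x_2^2\rangle_k$, while $M_2,M_3$ contribute through products $\langle x_2 D_{x_2}+D_{x_2}x_2\rangle_k$ multiplied by first-order operators in $(\xi,D_\xi)$ (these vanish after symmetrization). The $g^{\alpha\beta}_{(2)}$-terms, after expanding the Weyl-quantized symbols $g^{\alpha\beta}_{(2)}\!\bigl(x_2+\tfrac{\xi}{b_0},-D_\xi+\tfrac{D_{x_2}}{b_0}\bigr)$ in powers of $(x_2,D_{x_2})$ and keeping only even-degree pieces, produce contributions proportional to $R(x_0)$ and the moments above. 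Adding everything and reorganizing, the coefficient of $D_\xi^2$ becomes $(2k+1)\bigl(\beta+\tfrac{1}{12}b_0 R(x_0)\bigr)=(2k+1)\beta_1$, the coefficient of $\xi^2$ becomes $(2k+1)\alpha_1/b_0^2$, and the remaining scalar terms combine into $\tfrac{1}{2b_0}(2k^2+2k+1)(\alpha_1+\beta_1+\tfrac12 b_0 R(x_0))-\tfrac14 R(x_0)$, as claimed.

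The main obstacle is the bookkeeping, not any conceptual step. Expanding $g^{\alpha\beta}_{(2)}(x_2+\xi/b_0,-D_\xi+D_{x_2}/b_0)$ into an explicit polynomial in the four noncommuting operators $x_2,D_{x_2},\xi,D_\xi$ requires care with the commutators $[x_2,D_{x_2}]=-i$ and $[D_\xi,\xi]=-i$; similarly, each piece $b_0(x_2 M_\ell+M_\ell x_2)$ must be put in a form where the $x_2,D_{x_2}$ moments are manifest before the parity/Hermite computation can be applied. Once this symbolic expansion is done cleanly, the identification with $\cH_k$ is a matter of algebra, and the shift $\alpha\mapsto\alpha_1$, $\beta\mapsto\beta_1$ arises naturally from combining the $\alpha$-contribution of $L,M_1,M_4$ with the $R(x_0)$-contribution of the curvature and metric corrections.
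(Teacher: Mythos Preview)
Your approach is the same as the paper's: project $T_2$ term by term onto $\psi_k$ in the $x_2$-variable using parity and the standard harmonic-oscillator moments. However, two of your intermediate claims are wrong, and if taken at face value they would give the wrong operator $\cH_k$.

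First, the assertion that ``the $g^{\alpha\beta}_{(2)}$-pieces produce only scalar (i.e.\ $(\xi,D_\xi)$-independent) contributions upon projection'' is false. For instance, $g^{11}_{(2)}\bigl(x_2+\tfrac{\xi}{b_0},-D_\xi+\tfrac{1}{b_0}D_{x_2}\bigr)D_{x_2}^2=\tfrac{1}{6}R(x_0)\bigl(-D_\xi+\tfrac{1}{b_0}D_{x_2}\bigr)^2D_{x_2}^2$ contains the even-parity monomial $D_\xi^2 D_{x_2}^2$, whose projection is $\tfrac{1}{12}(2k+1)b_0 R(x_0)\,D_\xi^2$. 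Likewise $g^{22}_{(2)}$ produces a $\xi^2$ term. These are precisely the pieces that convert $(2k+1)\beta$ and $(2k+1)\alpha/b_0^2$ into $(2k+1)\beta_1$ and $(2k+1)\alpha_1/b_0^2$; you acknowledge the shift in your last paragraph, but your own reasoning two sentences earlier rules it out. The argument you give (``each remaining monomial has the form (function of $\xi,D_\xi)\times$(even polynomial in $x_2,D_{x_2}$)'') in fact proves only that the projection is an operator in $(\xi,D_\xi)$, not that it is a scalar.

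Second, your parity bookkeeping for the $M_\ell$'s is off. Under $x_2\mapsto -x_2$, $D_{x_2}\mapsto -D_{x_2}$, the operators $M_0$, $M_3$, $M_4$ are \emph{even}, so $b_0(x_2 M_\ell+M_\ell x_2)$ is odd and projects to zero for $\ell=0,3,4$; only $M_1$ and $M_2$ survive (and the $M_2$ contribution then vanishes because $\langle x_2 D_{x_2}+D_{x_2}x_2\rangle_k=0$). In particular $M_4$ does not contribute to the $\alpha\to\alpha_1$ shift, contrary to what you write at the end. Similarly, the $\Gamma^{\alpha}_{(1)}$ terms do not ``contribute only through their constant-in-$(x_2,D_{x_2})$ parts'': e.g.\ the $\Gamma^1_{(1)}$ term yields $\tfrac{i}{3}R(x_0)x_2 D_{x_2}$, whose expectation $\tfrac{i}{3}R(x_0)\cdot\tfrac{i}{2}=-\tfrac{1}{6}R(x_0)$ is a genuine, nonzero scalar contribution.

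Once these points are corrected, the computation goes through exactly as in the paper.
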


\begin{proof}
We have
\[
D_{x_2}\psi_{k}=-ib_0^{1/2}\left(2k\psi_{k-1}- b_0^{1/2}
x_2\psi_{k}\right),
\]
\[
D^2_{x_2}\psi_{k}=b_0(2k+1-b_0x_2^2)\psi_{k},
\]
and
\[
D^3_{x_2}\psi_{k}= 2ib_0^2x_2\psi_{k} +
b_0(2k+1-b_0x_2^2)D_{x_2}\psi_{k}.
\]
We also have
\begin{align*}
2xH_k= & H_{k+1}+2kH_{k-1},\\
4x^2H_k= & H_{k+2}+(4k+2)H_k+4k(k-1)H_{k-2},\\
8x^3H_k= & H_{k+3}+(6k+6)H_{k+1}+12k^2H_{k-1}+8k(k-1)(k-2)H_{k-3}\\
16x^4H_k(x)= & H_{k+4}(x)+(8k+12)H_{k+2}(x)+12(2k^2+2k+1)H_k(x)\\
&+16(2k^2-3k+1)kH_{k-2}(x)+16k(k-1)(k-2)(k-3)H_{k-4}(x),
\end{align*}
that implies that
\begin{align*}
\frac{1}{\|H_k\|^2}\langle x_2\psi_{k-1}, \psi_{k}\rangle = & \frac{1}{2b_0^{1/2}},\\
\frac{1}{\|H_k\|^2}\langle x^3_2\psi_{k-1}, \psi_{k}\rangle = & \frac{3}{4b_0^{3/2}}k,\\
\frac{1}{\|H_k\|^2}\langle x^2_2\psi_{k}, \psi_{k}\rangle = & \frac{1}{2b_0}(2k+1),\\
\frac{1}{\|H_k\|^2}\langle x^4_2 \psi_{k}, \psi_{k}\rangle = &
\frac{3}{4b_0^2}(2k^2+2k+1).
\end{align*}
Next, we have
\begin{align*}
\frac{1}{\|H_k\|^2}\langle x_2D_{x_2} \psi_{k}, \psi_{k}\rangle = &
\frac{-ib_0^{1/2}}{\|H_k\|^2}\langle 2kx_2\psi_{k-1}-
b_0^{1/2}x^2_2\psi_{k}, \psi_{k}\rangle\\
= & \frac{1}{2}i,\\
\frac{1}{\|H_k\|^2} \langle D^2_{x_2} \psi_{k}, \psi_{k}\rangle = &
\frac{b_0}{\|H_k\|^2}
\langle (2k+1-b_0x_2^2)\psi_{k}, \psi_{k}\rangle \\
= & \frac{1}{2}(2k+1)b_0,\\
\frac{1}{\|H_k\|^2}\langle x_2^2 D^2_{x_2} \psi_{k}, \psi_{k}\rangle
= & \frac{b_0}{\|H_k\|^2}
\langle ((2k+1)x_2^2-b_0x_4^2)\psi_{k}, \psi_{k}\rangle \\
= & \frac{1}{4}(2k^2+2k-1),\\
 \frac{1}{\|H_k\|^2}\langle D^4_{x_2}
\psi_{k}, \psi_{k}\rangle = & \frac{b_0^2}{\|H_k\|^2}
\langle (2k+1-b_0x_2^2)^2\psi_{k}, \psi_{k}\rangle \\
= & \frac{3}{4}(2k^2+2k+1)b_0^2.
\end{align*}

First, remark that
\[
\delta_0\lambda= \frac{1}{\|H_k\|}\int \left(-\frac{1}{6}i R(x_0)
\frac{\xi}{b_0}D_{x_2}-\frac{1}{6}i R(x_0)b_0x_2 D_\xi\right)
u(w)\psi_k(x_2)dx_2=0,
\]
since the operator $-\frac{1}{6}i R(x_0)
\frac{\xi}{b_0}D_{x_2}-\frac{1}{6}i R(x_0)b_0x_2 D_\xi$ is odd in
the $x_2$ variable.

Next, we have
\begin{align*}
\delta_1\lambda= & \frac{2b_0}{\|H_k\|^2} \langle x_2^2\psi_{k},
\psi_{k}\rangle
L(\xi,D_\xi)\chi(\xi)\\
= & (2k+1)\left(\frac{\alpha}{b^2_0}\xi^2+\beta
D^2_\xi\right)\chi(\xi).
\end{align*}

Now we consider
\[
\delta_2\lambda= \frac{1}{\|H_k\|}\int b_0 \left(x_2M(w,D_w)
+M(w,D_w) x_2\right)u(w)\psi_k(x_2)dx_2.
\]
The operators $M_1(w,D_w)$ and $M_2(w,D_w)$ are odd in the $x_2$
variable, so we obtain
\begin{align*}
\delta_2\lambda = & \frac{1}{\|H_k\|}\int b_0 \left(x_2M_1(w,D_w)
+M_1(w,D_w)
x_2\right)u(w)\psi_k(x_2)dx_2\\
& + \frac{1}{\|H_k\|}\int b_0 \left(x_2M_2(w,D_w) +M_2(w,D_w)
x_2\right)u(w)\psi_k(x_2)dx_2\\
= & \frac{b_0}{\|H_k\|^2} \langle  \left(
\frac{2\alpha}{3}x_2^4+\frac{\beta}{b_0^2}x^2_2D_{x_2}^2
+\frac{\beta}{b_0^2}x_2D_{x_2}^2
x_2\right)\psi_{k}, \psi_{k}\rangle \chi(\xi)\\
& - \frac{b_0}{\|H_k\|^2} \langle \left(2\frac{\beta}{b_0^2}x_2
D_{x_2} + 2\frac{\beta}{b_0^2}D_{x_2}x_2\right)\psi_{k},
\psi_{k}\rangle \xi D_\xi \chi(\xi) \\
= & \frac{2b_0}{\|H_k\|^2} \langle  \left(
\frac{\alpha}{3}x_2^4+\frac{\beta}{b_0^2}x^2_2D_{x_2}^2
-i\frac{\beta}{b_0^2}x_2D_{x_2}\right) \psi_{k}, \psi_{k}\rangle  \chi(\xi) \\
& + \frac{2b_0}{\|H_k\|^2} \langle \left(-\frac{2\beta}{b_0^2}x_2
D_{x_2} +i \frac{\beta}{b_0^2}\right)\psi_{k}, \psi_{k}\rangle \xi
D_\xi \chi(\xi).
\end{align*}
Thus we arrive at
\[
\delta_2\lambda =\frac{\alpha+\beta}{2b_0} (2k^2+2k+1) \chi(\xi).
\]

Next, we consider
\[
\delta_3\lambda = \frac{1}{\|H_k\|}\int g^{11}_{(2)}
\left(x_2+\frac{\xi}{b_0},-D_\xi+\frac{1}{b_0}D_{x_2}\right)
D^2_{x_2}u(w)\psi_k(x_2)dx_2.
\]
By \eqref{e:g2}, we have
\[
g^{11}_{(2)}
\left(x_2+\frac{\xi}{b_0},-D_\xi+\frac{1}{b_0}D_{x_2}\right)=
\frac{1}{6} R(x_0)(-D_\xi+\frac{1}{b_0}D_{x_2})^2.
\]
Therefore,
\[
\delta_3\lambda = \frac{1}{6\|H_k\|}R(x_0) \int
(-D_\xi+\frac{1}{b_0} D_{x_2})^2D^2_{x_2}u(w)\psi_k(x_2)dx_2.
\]
It suffices to consider the terms, which are even with respect to
$x_2$:
\[
\delta_3\lambda = \frac{1}{6\|H_k\|}R(x_0) \int
(D^2_\xi+\frac{1}{b^2_0}D^2_{x_2})D^2_{x_2}u(w)\psi_k(x_2)dx_2.
\]
Thus, we obtain that
\begin{align*}
\delta_3\lambda = & \frac{1}{6\|H_k\|^2} R(x_0) \langle
D^2_{x_2}\psi_k, \psi_k\rangle D^2_\xi\chi(\xi)\\
& + \frac{1}{6b^2_0\|H_k\|^2} R(x_0) \langle D^4_{x_2}\psi_k,
\psi_k\rangle \chi(\xi)\\
= & \frac{1}{12}(2k+1)b_0 R(x_0)D^2_\xi\chi(\xi)\\
& + \frac{1}{8}(2k^2+2k+1)R(x_0)\chi(\xi).
\end{align*}

Next, we consider
\[
\delta_4\lambda = -\frac{1}{\|H_k\|}\int g^{12}_{(2)}
\left(x_2+\frac{\xi}{b_0},-D_\xi+\frac{1}{b_0}D_{x_2}\right) \Big(
D_{x_2} (b_0x_2) + b_0x_2D_{x_2}\Big) u(w)\psi_k(x_2)dx_2.
\]
By \eqref{e:g2}, we have
\[
\delta_4\lambda = \frac{1}{6\|H_k\|} R(x_0) \int
\left(x_2+\frac{\xi}{b_0}\right)
\left(-D_\xi+\frac{1}{b_0}D_{x_2}\right)\left(
2b_0x_2D_{x_2}-ib_0\right)u(w)\psi_k(x_2)dx_2.
\]
It suffices to consider the terms, which are even with respect to
$x_2$:
\begin{align*}
\delta_4\lambda = & \frac{1}{6\|H_k\|}R(x_0) \int (x_2D_{x_2}-\xi
D_\xi)\Big(2x_2D_{x_2}-i\Big)u(w)\psi_k(x_2)dx_2 \\
= & \frac{1}{3\|H_k\|^2 }R(x_0) \langle x^2_2D^2_{x_2}\psi_k,
\psi_k\rangle \chi(\xi) \\
& - \frac{1}{2\|H_k\|^2 }iR(x_0) \langle x_2D_{x_2} \psi_k,
\psi_k\rangle \chi(\xi)\\
& - \frac{1}{6\|H_k\|^2 }R(x_0) \langle (2x_2D_{x_2}-i)\psi_k,
\psi_k \rangle \xi D_\xi \chi(\xi)
 \\
= & \left(\frac{1}{12}(2k^2+2k+1)R(x_0)+\frac{1}{12}R(x_0)\right)
\chi(\xi).
\end{align*}

Consider
\[
\delta_5\lambda = \frac{1}{\|H_k\|} \int g^{22}_{(2)}
\left(x_2+\frac{\xi}{b_0},-D_\xi+\frac{1}{b_0}D_{x_2}\right)
b_0^2x_2^2u(w)\psi_k(x_2)dx_2.
\]
By \eqref{e:g2}, we have
\[
\delta_5\lambda = \frac{1}{6\|H_k\|} R(x_0) \int
(x_2+\frac{\xi}{b_0})^2b_0^2x_2^2u(w)\psi_k(x_2)dx_2.
\]
It suffices to consider the terms, which are even with respect to
$x_2$:
\begin{align*}
\delta_5\lambda = & \frac{1}{6\|H_k\|} R(x_0) \int
(x^2_2+\frac{\xi^2}{b^2_0})b_0^2
x_2^2u(w)\psi_k(x_2)dx_2 \\
= & \frac{1}{6\|H_k\|^2 }b_0^2 R(x_0) \langle x^4_2\psi_{k},
\psi_{k}\rangle \chi(\xi) + \frac{1}{6\|H_k\|^2 } R(x_0) \langle
x_2^2\psi_{k}, \psi_{k}\rangle \xi^2 \chi(\xi) \\
= & \frac{1}{8} (2k^2+2k+1) R(x_0)\chi(\xi)+ \frac{1}{12b_0}(2k+1)
R(x_0)\xi^2 \chi(\xi).
\end{align*}

Next, consider
\[
\delta_6\lambda = -\frac{i}{\|H_k\|} \int
\Gamma^{1}_{(1)}\left(x_2+\frac{\xi}{b_0},-D_\xi+\frac{1}{b_0}D_{x_2}\right)D_{x_2}
u(w)\psi_k(x_2)dx_2.
\]
By \eqref{e:gamma1}, we have
\[
\delta_6\lambda = \frac{i}{3\|H_k\|}R(x_0) \int
\left(x_2+\frac{\xi}{b_0}\right)D_{x_2} u(w)\psi_k(x_2)dx_2.
\]
It suffices to consider the terms, which are even with respect to
$x_2$:
\begin{align*}
\delta_6\lambda = & \frac{i}{3\|H_k\|^2 }R(x_0) \langle x_2
D_{x_2} \psi_{k}, \psi_{k}\rangle \chi(\xi) \\
= & - \frac{1}{6}R(x_0)\chi(\xi).
\end{align*}

Finally, we take
\[
\delta_7\lambda = \frac{i}{\|H_k\|} \int
\Gamma^{2}_{(1)}\left(x_2+\frac{\xi}{b_0},-D_\xi+\frac{1}{b_0}D_{x_2}\right)b_0x_2
u(w)\psi_k(x_2)dx_2.
\]
By \eqref{e:gamma1}, we have
\[
\delta_7\lambda = \frac{i}{3\|H_k\|}R(x_0) \int
\left(-D_\xi+\frac{1}{b_0}D_{x_2}\right)b_0x_2 u(w)\psi_k(x_2)dx_2.
\]
It suffices to consider the terms, which are even with respect to
$x_2$:
\begin{align*}
\delta_7\lambda = & \frac{i}{3\|H_k\|^2}R(x_0) \langle
(x_2D_{x_2}-i)
\psi_{k}(w), \psi_{k}(w)\rangle \chi(\xi)\\
= & - \frac{1}{6}R(x_0)\chi(\xi).
\end{align*}

We conclude
\begin{align*}
\lefteqn{\frac{1}{\|H_k\|} \int T_2(w,D_{w})u(w) \psi_{k}(x_2) dx_2} \\
= & \delta_0\lambda+
\delta_1\lambda+\delta_2\lambda+\delta_3\lambda+
\delta_4\lambda+\delta_5\lambda+\delta_6\lambda+\delta_7\lambda \\
= & \frac{1}{2b_0} (2k^2+2k+1)\left(\alpha+\beta+\frac{2}{3} b_0
R(x_0)\right) \chi(\xi) \\ & +(2k+1)\left(\beta +
\frac{1}{12} b_0 R(x_0)\right)D^2_\xi\chi(\xi)\\
& + (2k+1) \left(\alpha+ \frac{1}{12} b_0 R(x_0) \right)
\frac{1}{b^2_0}\xi^2 \chi(\xi)
\\ & -\frac{1}{4}R(x_0)\chi(\xi)\\
= & \frac{1}{2b_0} (2k^2+2k+1)\left(\alpha_1+\beta_1+\frac{1}{2} b_0
R(x_0)\right)\chi(\xi) \\ & +(2k+1)\beta_1D^2_\xi\chi(\xi)+
(2k+1)\alpha_1 \frac{1}{b^2_0}\xi^2 \chi(\xi)
-\frac{1}{4}R(x_0)\chi(\xi).
\end{align*}
\end{proof}

Thus, we obtain that
\[
\lambda^{(2)}=\lambda^{(2)}_{jk}=\nu_{jk},\quad j,k\in \NN,
\]
where $\nu_{jk}$ is an eigenvalue of the harmonic oscillator
$\cH_k$:
\begin{multline*}
\nu_{jk}=(2j+1)(2k+1)(\alpha_1\beta_1)^{1/2}\frac{1}{b_0}\\
+\frac{1}{2b_0}
(2k^2+2k+1)\left(\alpha_1+\beta_1\right)+\frac{1}{2}(k^2+k) R(x_0),
\quad j,k\in\NN,
\end{multline*}
and
\[
\chi_0(\xi)=\Psi_{jk}(\xi),
\]
where $\Psi_{jk}$ is the normalized eigenfunction of $\cH_k$
associated to the eigenvalue $\nu_{jk}$.

Moreover, we conclude that $u^{(2)}$ is a solution of \eqref{e:1},
which can be written as
\[
u^{(2)}=\phi^{(2)}(x_2,\xi)+\psi_{k}(x_2)\chi_2(\xi),
\]
where $\phi^{(2)}$ is a solution of \eqref{e:1}, satisfying the
condition
\[
\int \phi^{(2)}(x_2,\xi)\psi_{k}(x_2)\,dx_2=0,
\]
and $\chi_2$ will be determined later.

Now the cancelation of the coefficient of $h^{3/2}$ gives:
\begin{multline}\label{e:3/2}
( T_0(x_2,D_{x_2})-\lambda^{(0)})u^{(3)}\\ = \lambda^{(3)}u^{(0)} -
T_3(w,D_{w})u^{(0)}+\lambda^{(2)}u^{(1)} - T_2(w,D_{w})u^{(1)}.
\end{multline}
The orthogonality condition for \eqref{e:3/2} is written as
\begin{equation}\label{e:3/2ort}
\lambda^{(3)}\chi_0 - \frac{1}{\|H_k\|}\int
T_3(w,D_{w})u^{(0)}\psi_{k}(x_2)\,dx_2 +\lambda^{(2)}\chi_1 -
\cH_k\chi_1=0.
\end{equation}
Under this assumption, we obtain that $u^{(3)}$ is a solution of
\eqref{e:3/2}, which can be written as
\[
u^{(3)}=\phi^{(3)}(x_2,\xi)+\frac{1}{\|H_k\|}\psi_{k}(x_2)\chi_3(\xi),
\]
where $\phi^{(3)}$ is a solution of \eqref{e:3/2}, satisfying the
condition
\[
\int \phi^{(3)}(x_2,\xi)\psi_{k}(x_2)\,dx_2=0,
\]
and $\chi_3$ will be determined later.

The equation \eqref{e:3/2ort} has a solution if and only if
\[
\lambda^{(3)} = \frac{1}{\|H_k\|}\iint
T_3(w,D_{w})u^{(0)}(w)\psi_{k}(x_2)\Psi_{jk}(\xi)\,dx_2\,d\xi,
\]
which allow us to find $\lambda^{(3)}$. Under this condition, there
exists a unique solution $\chi_1$ of \eqref{e:3/2ort}, orthogonal to
$\chi_0$

\medskip\par
{\bf The iteration procedure.} Suppose that the coefficients of
$h^{\ell/2}$ equal zero for $\ell=0,\ldots,n-1$, $n>3$. Then we know
the coefficients $\lambda^{(\ell)}$ for $\ell=0,\ldots,n-1$. We also
know that $u^{(\ell)}$ for $\ell=0,\ldots,n-1$ can be written as
\[
u^{(\ell)}=\phi^{(\ell)}(x_2,\xi)+\frac{1}{\|H_k\|}\psi_{k}(x_2)\chi_\ell(\xi),
\]
where $\phi^{(\ell)}, \ell=0,\ldots,n-1$, are some known functions
in $\mathcal S(\mathbb R^2)$, satisfying the condition
\[
\int \phi^{(\ell)}(x_2,\xi)\psi_{k}(x_2)\,dx_2=0,
\]
and $\chi_\ell \in \mathbb S(\mathbb R)$ are known for
$\ell=0,\ldots,n-3$, $\chi_\ell \perp \chi_0$.

The cancelation of the coefficient of $h^{n/2}$ gives:
\begin{multline}\label{e:n/2}
(T_0(x_2,D_{x_2})-\lambda^{(0)})u^{(n)} = \lambda^{(n)}u^{(0)} -
T_n(w,D_{w})u^{(0)}\\
 +\sum_{\ell=3}^{n-1}(\lambda^{(\ell)}u^{(n-\ell)} -
T_\ell(w,D_{w})u^{(n-\ell)})+\lambda^{(2)}u^{(n-2)} -
T_2(w,D_{w})u^{(n-2)}.
\end{multline}
The orthogonality condition for \eqref{e:n/2} is written as
\begin{multline}\label{e:n/2ort}
\lambda^{(n)}\chi_0 - \frac{1}{\|H_k\|} \int
T_n(w,D_{w})u^{(0)}\psi_{k}(x_2)\,dx_2\\
+\sum_{\ell=3}^{n-1}(\lambda^{(\ell)}\chi_{n-\ell} -
\frac{1}{\|H_k\|} \int
T_\ell(w,D_{w})u^{(n-\ell)}\psi_{k}(x_2)\,dx_2)
\\ - \frac{1}{\|H_k\|} \int
T_\ell(w,D_{w})\phi^{(n-\ell)}\psi_{k}(x_2)\,dx_2
+\lambda^{(2)}\chi_{n-2} -\cH_k\chi_{n-2}=0.
\end{multline}

Under this assumption, we obtain that $u^{(n)}$ is a solution of
\eqref{e:n/2}, which can be written as
\[
u^{(n)}=\phi^{(n)}(x_2,\xi)+\frac{1}{\|H_k\|}\psi_{k}(x_2)\chi_n(\xi),
\]
where $\phi^{(n)}$ is a solution of \eqref{e:n/2}, satisfying the
condition
\[
\int \phi^{(n)}(x_2,\xi)\psi_{k}(x_2)\,dx_2=0,
\]
and $\chi_n$ will be determined later.

The orthogonality condition for \eqref{e:n/2ort} allows us to find
$\lambda^{(n)}$. Under this condition, there exists a unique
solution $\chi_{n-2}$ of \eqref{e:n/2ort}, orthogonal to $\chi_0$.
\medskip
\par
Thus, for any $j\in\NN$ and $k\in\NN$, we have constructed an
approximate eigenfunction $u^h_{jk}$ of the operator $T^h(w,D_{w})$
admitting an asymptotic expansion in the form of a formal asymptotic
series in powers of $h^{1/2}$
\[
u^h_{jk}=\sum_{\ell=0}^\infty u^{(\ell)}_{jk}h^{\ell/2}, \quad
u^{(\ell)}_{jk}\in {\cS(\RR^2)},
\]
such that
\begin{equation}\label{e:u0}
u^{(0)}_{jk}(x_2,\xi)=\frac{1}{\|H_k\|}\psi_{k}(x_2)\Psi_{jk}(\xi).
\end{equation}
with the corresponding approximate eigenvalue
\[
\lambda^h_{jk}=\sum_{\ell=0}^\infty\lambda^{(\ell)}_{jk}h^{\ell/2}.
\]
For any $N\in \NN$, consider
\[
u^h_{jk(N)}=\sum_{\ell=0}^N u^{(\ell)}_{jk}h^{\ell/2}, \quad
\lambda^h_{jk(N)}=\sum_{\ell=0}^N\lambda^{(\ell)}_{jk}h^{\ell/2}.
\]
Then we have
\[
T^h(w,D_{w})u^h_{jk(N)}-\lambda^h_{jk(N)}u^h_{jk(N)}=O(h^{\frac{N+1}{2}}).
\]

The constructed functions $u^h_{jk(N)}$ have sufficient decay
properties. Therefore, by changing back to the original coordinates
and multiplying by a fixed cut-off function, we obtain the desired
functions $\phi^h_{jkN}$, which satisfy \eqref{e:Hh} with
$\mu_{j,k,\ell}=\lambda^{(\ell)}_{jk}$.

The system $\{u^{(0)}_{jk}\}$ is an orthonormal system. Since each
change of variables, which we use, is unitary, this implies the
condition~\eqref{e:orth}.

\section{Lower bounds}\label{s:lower} In this section, we will prove the lower bound in
Theorem~\ref{t:main}. First, we recall a general lower bound due to
Montgomery \cite{Mont}. Suppose that $U$ is a domain in $M$. Then,
for any $u\in C^\infty_c(U)$, the following estimate holds:
\begin{equation}\label{e:Mont}
\|(ih\,d+{\bf A})u\|^2_U\geq \left|\int_U b |u|^2dx_g\right|.
\end{equation}
This fact is an immediate consequence of a Weitzenb\"ock-Bochner
type identity.

From \eqref{e:Mont}, it follows that we can restrict our
considerations by any sufficiently small neighborhood $\Omega$ of
$x_0$. Denote by $H^h_{D}$ the Dirichlet realization of the operator
$H^h$ in $L^2(\Omega,dx_g)$.

The estimate \eqref{e:Mont} implies that
\[
\tau h H^h_{D} +(1-\tau h)hb\leq H^h_{D}, \quad 0<\tau<h^{-1}.
\]
Taking $\tau=h^{-1/2}$, we obtain
\[
h^{1/2} (H^h_{D}-hb +h^{1/2}b)\leq H^h_{D}, \quad 0<h<1.
\]
Consider the Dirichlet realization $P^h_D$ of the operator
$H^h-hb+h^{1/2}(b-b_0)$ in $L^2(\Omega,dx_g)$. Then we have
\begin{equation}\label{e:lambdaj}
hb_0+h^{1/2} \lambda_j(P^h_{D})\leq \lambda_j(H^h_{D}).
\end{equation}
Therefore, the desired lower bound for $\lambda_j(H^h_{D})$ is an
immediate consequence of the following theorem.

\begin{thm}\label{t:PAV}
For any $j\in \NN$, there exist $C_j>0$ and $h_j>0$ such that
\[
\lambda_j(P^h_D)\geq h^{3/2}\left[\frac{2d^{1/2}}{b_0}j
+\frac{a^2}{2b_0}\right] - C_jh^{15/8}, \quad h\in (0,h_j].
\]
\end{thm}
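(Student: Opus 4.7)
The plan is to combine the min--max principle with a Landau-level decomposition adapted to the harmonic approximation at $x_0$. First I would localize near $x_0$: Montgomery's inequality \eqref{e:Mont} yields $(H^h u,u)\geq h\int b|u|^2\,dx_g$, so $(P^h_D u,u)\geq h^{1/2}\int(b-b_0)|u|^2\,dx_g$, and away from $x_0$ the right-hand side already dominates the target scale $h^{3/2}$. An IMS-type partition of unity (combined if needed with Agmon-type weighted estimates) then restricts the analysis to a small neighborhood of $x_0$ with an error much smaller than $h^{15/8}$.

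Next, I would apply the same sequence of transformations as in Section~\ref{s:upper}: normal coordinates and the gauge $A_1=0$, $A_2=b_0 x + O(|X|^3)$, conjugation by $|g|^{1/4}$, the scaling $x=h^{1/2}x_1$, $y=h^{1/2}y_1$, the partial Fourier transform in $y_1$, and the translation $x_2=x_1-\xi/b_0$. After these steps the operator becomes
\[
\widetilde{P}^h = h(T_0-b_0) + h^{3/2}\cB_2 + h^2 R(h),
\]
where $T_0=D^2_{x_2}+b_0^2 x_2^2$, the image of $b-b_0$ is
$\cB_2 = \alpha_1(x_2+\xi/b_0)^2+\beta_1(-D_\xi+D_{x_2}/b_0)^2$,
and $R(h)$ is a second-order bounded-coefficient remainder coming from the Taylor expansions of $b$, $g$, $A$ and from the higher pieces of $T^h$.

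Let $\Pi_0$ be the orthogonal projector onto $\psi_0(x_2)\otimes L^2(\RR_\xi)$, where $\psi_0$ is the ground state of $T_0$ with eigenvalue $b_0$, so that $T_0-b_0\geq 2b_0(I-\Pi_0)$. Using the Hermite moment identities from Section~\ref{s:upper} one computes
\[
\Pi_0\cB_2\Pi_0 = \beta_1 D_\xi^2 + \frac{\alpha_1}{b_0^2}\xi^2 + \frac{\alpha_1+\beta_1}{2b_0},
\]
which is precisely the $k=0$ harmonic oscillator $\cH_0$, whose $j$-th eigenvalue equals $\frac{2d^{1/2}}{b_0}j+\frac{a^2}{2b_0}$. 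Let $V_j$ be the span of the rescaled quasi-modes $\psi_0\otimes\Psi_{j'0}$, $j'<j$, built in Section~\ref{s:upper}. The min-max principle reduces the theorem to proving
\[
\langle u,\widetilde{P}^h u\rangle \geq h^{3/2}\Bigl[\frac{2d^{1/2}}{b_0}j+\frac{a^2}{2b_0}\Bigr]\|u\|^2 - C_j h^{15/8}\|u\|^2
\]
for every $u$ orthogonal to $V_j$.

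Writing $u=u_0+u_\perp$ with $u_0=\Pi_0 u$ and $u_\perp=(I-\Pi_0)u$, one expands
\[
\langle u,\widetilde{P}^h u\rangle \geq 2hb_0\|u_\perp\|^2 + h^{3/2}\langle u_0,\cH_0 u_0\rangle + 2h^{3/2}\mathrm{Re}\langle u_0,\cB_2 u_\perp\rangle + h^{3/2}\langle u_\perp,\cB_2 u_\perp\rangle + O(h^2)\|u\|^2.
\]
The decisive step is a Cauchy--Schwarz inequality with a small parameter $\varepsilon$: rewriting $\langle u_0,\cB_2 u_\perp\rangle = \langle\cB_2 u_0,u_\perp\rangle$ gives
\[
2h^{3/2}|\langle u_0,\cB_2 u_\perp\rangle| \leq \varepsilon h\|u_\perp\|^2 + C\varepsilon^{-1}h^2\|u_0\|^2,
\]
and the choice $\varepsilon = h^{1/8}$ pins down the exponent: $\varepsilon h = h^{9/8}$ is absorbed into $2hb_0\|u_\perp\|^2$, leaving a positive coefficient of order $h$, while the dual term becomes precisely the $Ch^{15/8}\|u_0\|^2$ error in the theorem. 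The main obstacle is the unbounded, second-order nature of $\cB_2$: to make this Cauchy--Schwarz rigorous and to absorb the perp--perp term $h^{3/2}\langle u_\perp,\cB_2 u_\perp\rangle$, one must establish a form-sense operator inequality of the type $\cB_2\leq C(T_0+\cH_0+1)$ and exploit the spectral gap of $T_0$ on the range of $I-\Pi_0$. The same bootstrap handles the $h^2 R(h)$ remainder, and the exponent $15/8$ emerges as the optimum of the parameter $\varepsilon$ in this scheme.
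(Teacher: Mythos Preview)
Your route is genuinely different from the paper's, and the sketch as written has a real gap.

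\medskip
\textbf{What the paper does.} The paper never performs the Fourier transform or the translation $x_2=x_1-\xi/b_0$; it stays in the original normal coordinates on $\Omega$. The key tool is the weighted eigenfunction estimate (Lemma~\ref{l:weight}):
\[
\||X|^{k}u^{(j)}_h\|\leq C_{k,j}\bigl(h^{k/2}+h^{(3k+1)/8}\bigr)\|u^{(j)}_h\|,\qquad
\||X|^{k}\nabla^h_\alpha u^{(j)}_h\|\leq C_{k,j}\bigl(h^{(k+1)/2}+h^{(3k+5)/8}\bigr)\|u^{(j)}_h\|.
\]
With these in hand one compares the quadratic form of $\hat P^h_D$ to that of the \emph{flat} model
$P^h_{flat}=(ihD_x-\tfrac12 b_0 y)^2+(ihD_y+\tfrac12 b_0 x)^2-hb_0+h^{1/2}b_2(X)$
on the span $V^{h,j}$ of the first $j{+}1$ eigenfunctions of $P^h_D$. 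The error terms are all of the type $h^{1/2}\int |X|^3|v|^2$, $\int |X|^2|\nabla^h v|^2$, etc., and the weighted estimates bound them; the worst one, $h^{1/2}\||X|^{3/2}v\|^2\leq C h^{1/2}\cdot h^{11/8}=Ch^{15/8}$, is where the exponent $15/8$ actually originates. Min--max then gives $\lambda_j(P^h_{flat})\leq \lambda_j(P^h_D)+C_jh^{15/8}$, and the spectrum of $P^h_{flat}$ (constant magnetic field plus quadratic potential on $\RR^2$) is computed explicitly.

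\medskip
\textbf{The gap in your argument.} Your Cauchy--Schwarz step
\[
2h^{3/2}\,\bigl|\langle u_0,\cB_2 u_\perp\rangle\bigr|=2h^{3/2}\,\bigl|\langle \cB_2 u_0,u_\perp\rangle\bigr|
\leq \varepsilon h\|u_\perp\|^2+C\varepsilon^{-1}h^{2}\|u_0\|^2
\]
silently requires $\|\cB_2 u_0\|\leq C\|u_0\|$. But $u_0=\psi_0(x_2)\chi(\xi)$ with $\chi\in L^2(\RR_\xi)$ arbitrary (subject only to $\chi\perp\Psi_{j'0}$, $j'<j$), and $\cB_2 u_0$ contains $\alpha_1 b_0^{-2}\xi^2\chi$ and $\beta_1 D_\xi^2\chi$, which are not $L^2$-bounded by $\|\chi\|$. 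The proposed cure $\cB_2\leq C(T_0+\cH_0+1)$ does not help: $\cH_0$ is itself unbounded on $L^2(\RR_\xi)$, and since you are running min--max over \emph{all} $u\perp V_j$, there is no a priori bound on $\langle\chi,\cH_0\chi\rangle$. The same unboundedness afflicts the remainder $h^2R(h)$, which contains $T_2$ and hence quartic combinations of $\xi,D_\xi,x_2,D_{x_2}$.

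To make the Landau-level reduction work you would need, as input, moment bounds of the type $\|\xi\,u\|+\|D_\xi u\|\leq C\|u\|$ uniformly in $h$ for low-energy states---i.e.\ exactly an analogue of Lemma~\ref{l:weight} in the transformed variables. Once you have such bounds, a Grushin/Feshbach scheme or a bootstrap does go through (this is how related results are proved elsewhere), but the simple $\varepsilon=h^{1/8}$ Cauchy--Schwarz you wrote is not enough on its own. The paper avoids the whole issue by comparing to $P^h_{flat}$ in the original coordinates, where the error terms are polynomial in $|X|$ rather than in $(\xi,D_\xi)$, and the needed decay is supplied by Lemma~\ref{l:weight}.
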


To prove Theorem~\ref{t:PAV}, we will follow the lines of the proof
of \cite[Theorem 7.4]{HM01}. First, we observe that the upper bound
in Theorem~\ref{t:main} and \eqref{e:lambdaj} imply an upper bound
for $\lambda_j(P^h_D)$:
\begin{equation}\label{e:upperPh}
\lambda_j(P^h_D)\leq
h^{3/2}\left[\frac{2d^{1/2}}{b_0}j+\frac{a^2}{2b_0}\right] +
C_jh^{2}.
\end{equation}

For any eigenvalue $\lambda_j(P^h_D)$, denote by $u^{(j)}_h$ an
associated eigenfunction. By a straightforward repetition of the
arguments of \cite{HM01}, we can easily show the following analogue
of Lemmas 7.10 and 7.11 in \cite{HM01}\footnote{There are a few
inaccuracies in \cite{HM01}, concerning Lemma 7.11. For the erratum,
see http://www.math.u-psud.fr/~helffer/erratum164II.pdf }.

\begin{lem}\label{l:weight}
For any $j\in\NN$ and any real $k\geq 0$, we have
\[
\| |X|^ku^{(j)}_h\|_{L^2(\Omega,dx_g)}\leq
C_{k,j}(h^{k/2}+h^{(3k+1)/8}) \| u^{(j)}_h\|_{L^2(\Omega,dx_g)}.
\]
For any $j\in\NN$, any $\alpha =1,2$ and any $k\geq 0$, we have
\[
\| |X|^k\nabla^h_\alpha u^{(j)}_h\|_{L^2(\Omega,dx_g)}\leq
C_{k,j}(h^{(k+1)/2}+h^{(3k+5)/8})\|u^{(j)}_h\|_{L^2(\Omega,dx_g)}.
\]
\end{lem}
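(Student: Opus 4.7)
The plan is to deduce both estimates from a single quadratic form lower bound on $P^h_D$, combined with the upper bound $\lambda_j(P^h_D)\leq C_j h^{3/2}$ from \eqref{e:upperPh}, and then to iterate in $k$ by IMS-type localization. Starting from the identity
\[
\langle P^h_D u, u\rangle = \|(ihd+{\bf A})u\|^2 - h\!\int_\Omega b|u|^2\,dx_g + h^{1/2}\!\int_\Omega(b-b_0)|u|^2\,dx_g,
\]
I would invoke Montgomery's inequality \eqref{e:Mont} in the form used to derive \eqref{e:lambdaj} (the operator inequality $hb\leq H^h$), and then shrink $\Omega$ around $x_0$ so that the non-degeneracy bound $b(X)-b_0 \geq C^{-1}|X|^2$ holds throughout. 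This yields
\[
\langle P^h_D u, u\rangle \;\geq\; C^{-1}\,h^{1/2}\,\|\,|X|\,u\|^2_{L^2(\Omega,dx_g)}, \qquad u\in C_c^\infty(\Omega).
\]
Applied to $u = u_h^{(j)}$ and combined with $\lambda_j(P^h_D)\leq C_j h^{3/2}$, this immediately yields the $k=1$ base case of the first estimate.

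For $k>1$, my plan is to test the same lower bound on $f_k\,u_h^{(j)}$, where $f_k$ is a smoothed version of $|X|^k$. The magnetic IMS localization identity
\[
\langle H^h(f_k u), f_k u\rangle \;=\; \mathrm{Re}\langle H^h u, f_k^2 u\rangle + h^2\|\,|df_k|\,u\|^2,
\]
combined with the eigenvalue equation $P^h_D u_h^{(j)} = \lambda_j(P^h_D)\,u_h^{(j)}$, will produce a two-term recursion of the form
\[
\||X|^{k+1} u_h^{(j)}\|^2 \;\lesssim\; h\,\||X|^k u_h^{(j)}\|^2 + k^2\, h^{3/2}\,\||X|^{k-1}u_h^{(j)}\|^2.
\]
Iterating this recursion, combined where necessary with Cauchy--Schwarz/log-convexity of the moments $k\mapsto\||X|^k u_h^{(j)}\|^2$ (or equivalently, with an Agmon exponential weight $\phi_h = c|X|^2/h^{3/4}$ producing decay at the scale $|X|\sim h^{3/8}$), will extract the sharpened exponent $(3k+1)/8$ claimed in the lemma.

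For the second estimate, I would expand
\[
\|(ihd+{\bf A})(f_k u_h^{(j)})\|^2 = \|ih\,df_k\cdot u_h^{(j)} + f_k(ihd+{\bf A})u_h^{(j)}\|^2,
\]
and solve for $\|f_k(ihd+{\bf A})u_h^{(j)}\|^2$. The right-hand side is controlled, via the IMS identity and the eigenvalue equation, by quantities of the type bounded in the first estimate, together with $h^2\|\,|df_k|u_h^{(j)}\|^2$, which is itself governed by the already-established polynomial moment bounds. The magnetic gradient thereby gains an extra $h^{1/2}$ factor, which is exactly the shift from $(3k+1)/8$ to $(3k+5)/8$.

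The hard part will be the simultaneous inductive book-keeping of the two families of estimates: the IMS commutators couple them tightly at every step, and obtaining the sharp exponents requires a delicate combination of the direct recursion (which is tight near $k=1$) with the Agmon-type information (which controls the behaviour for large $k$). This is precisely the point where the inaccuracies in the original \cite{HM01} write-up occurred, and where the erratum referenced in the footnote has to be consulted in order to secure the correct constants.
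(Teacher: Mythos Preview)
The paper does not supply its own proof of this lemma; it simply cites Lemmas~7.10--7.11 of \cite{HM01} (together with the erratum in the footnote) and asserts that the argument carries over verbatim. Your outline \emph{is} that argument: Montgomery's inequality together with \eqref{e:upperPh} for the base case, an Agmon/IMS localization for the higher moments, and one further IMS computation for the magnetic gradient.

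Two small clarifications are worth recording. First, the polynomial-weight recursion you display is strictly weaker than the stated bound: already at $k=2$ it yields only $\||X|^2 u_h^{(j)}\|\lesssim h^{3/4}$, whereas the lemma asserts $h^{7/8}$. The exponent $(3k+1)/8$ comes entirely from the exponential Agmon weight you mention, via
\[
\bigl\|\,|X|\,e^{c|X|^2/h^{3/4}}u^{(j)}_h\bigr\|\ \lesssim\ h^{1/2}\|u^{(j)}_h\|
\qquad\text{and}\qquad
\sup_{X}\ |X|^{k-1}e^{-c|X|^2/h^{3/4}}\ \sim\ h^{3(k-1)/8},
\]
so the polynomial recursion and the log-convexity interpolation are in fact not needed. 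Second, the two families of estimates are not as tightly coupled as you suggest: once the first is established in full, the second follows in a single step from the IMS identity and the observation $\langle H^h u^{(j)}_h,\, f_k^2 u^{(j)}_h\rangle\approx hb_0\|f_k u^{(j)}_h\|^2$, which produces exactly the extra factor of $h^{1/2}$ you predict.
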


Take normal local coordinates near $x_0$ such that $x_0$ corresponds
to $(0,0)$ and, in a neighborhood of $x_0$,
\[
b(X)=b_0+\alpha_1 x^2+\beta_1 y^2+O(|X|^3).
\]
So we have
\[
g_{11}(X)=1+O(|X|^2),\quad g_{12}(X)=O(|X|^2),\quad
g_{22}(X)=1+O(|X|^2).
\]
We can take a magnetic potential $A$ such that
\[
A_1(X)=-\frac12b_0y+O(|X|^3), \quad A_2(X)=\frac12b_0x+O(|X|^3).
\]

Let us introduce
\[
b_2(X)=\frac12 X\cdot {\rm Hess}\,b(0)\cdot X.
\]
Thus, we have
\[
b(X)=b_0+b_2(X)+O(|X|^3).
\]

We have
\[
P^h_{D}=\sum_{1\leq \alpha,\beta\leq 2} g^{\alpha\beta}(X)
\nabla^h_\alpha\nabla^h_\beta+i h \sum_{1\leq \alpha\leq
2}\Gamma^\alpha \nabla^h_\alpha-hb(X)+h^{1/2} (b(X)-b_0),
\]
so its quadratic form is given by
\begin{multline*}
(P^h_{D}u,u)= \int_\Omega \sum_{1\leq \alpha,\beta\leq 2}
g^{\alpha\beta}(X) \nabla^h_\alpha u(X) \overline{\nabla^h_\beta
u(X)} \sqrt{g(X)}dX \\ -h\int_\Omega b(X)|u(X)|^2\sqrt{g(X)}dX
+h^{1/2} \int_\Omega (b(X)-b_0)|u(X)|^2\sqrt{g(X)}dX.
\end{multline*}
Note that
\[
P^h_{D}\geq 0.
\]

Now we move the operator $P_D^{h}$ into the Hilbert space
$L^2(\Omega,dX)$, using the unitary change of variables
$v=|g(X)|^{1/4}u$. For the corresponding operator
$$ \hat{P}_D^{h}=|g(X)|^{1/4}P_D^{h} |g(X)|^{-1/4}$$ in
$L^2(\Omega,dX)$, we obtain
\begin{multline*}
(\hat{P}_D^{h}v,v)=\int_\Omega \sum_{1\leq \alpha,\beta\leq 2}
g^{\alpha\beta}(X)\left( \nabla^h_\alpha-\frac14 ih
|g(X)|^{-1}\frac{\partial}{\partial X_\alpha}|g(X)|\right)
v(X)\times
\\ \times \overline{\left(\nabla^h_\beta -\frac14 ih
|g(X)|^{-1}\frac{\partial}{\partial X_\alpha}|g(X)|\right) v(X)}dX\\
-h\int_\Omega b(X)|v(X)|^2 dX +h^{1/2} \int_\Omega
(b(X)-b_0)|v(X)|^2 dX.
\end{multline*}

Put
\[
q(v)=\int_\Omega \sum_{1\leq \alpha\leq 2} \left|\left(
\nabla^h_\alpha-\frac14 ih |g(X)|^{-1}\frac{\partial}{\partial
X_\alpha}|g(X)|\right) v(X)\right|^2 dX.
\]

Then we have
\begin{multline}\label{e:qv}
|(\hat{P}_D^{h}v,v)-q(v)|\\ \leq \int_\Omega \sum_{1\leq \alpha\leq
2} |X|^2 \left|\left( \nabla^h_\alpha-\frac14 ih
|g(X)|^{-1}\frac{\partial}{\partial
X_\alpha}|g(X)|\right) v(X)\right|^2 dX\\
+C_1 h\int_\Omega |v(X)|^2 dX +C_2h^{1/2} \int_\Omega |X|^2 |v(X)|^2
dX.
\end{multline}

Consider the Dirichlet realization $P_{flat,D}^{h}$ of the operator
\[
P_{flat}^{h}=\left(i h \frac{\partial}{\partial x}-\frac12
b_0y\right)^2+\left(i h \frac{\partial}{\partial y}+ \frac12
b_0x\right)^2-hb_0+h^{1/2} b_2(X)
\]
in the space $L^2(\Omega,dX)$. So its quadratic form is given by
\[
(P_{flat}^{h}v,v)= q^{flat}(v)-hb_0\int_\Omega |v(X)|^2 dX +h^{1/2}
\int_\Omega b_2(X)|v(X)|^2 dX,
\]
where
\[
q^{flat}(v)=\int_\Omega \left|\left(i h \frac{\partial}{\partial
x}-\frac12 b_0y\right)v(X)\right|^2 dX + \int_\Omega \left|\left(i h
\frac{\partial}{\partial y}+ \frac12 b_0x\right)v(X)\right|^2 dX.
\]

So we have
\begin{multline}\label{e:P-P}
|(\hat{P}_D^{h}v,v)-(P_{flat}^{h}v,v)| \leq |q(v)-q^{flat}(v)|
\\ + \sum_\alpha \int_\Omega |X|^2 |\nabla^h_\alpha v(X)|^2 dX +
h\int_\Omega |X|^2 |v(X)|^2 dX +h^{1/2} \int_\Omega |X|^3|v(X)|^2
dX.
\end{multline}

Finally, we have
\begin{multline}\label{e:q-q}
|q(v)-q^{flat}(v)|\\ \leq C(q(v))^{1/2}\left[h\left(\int_\Omega
|X|^2 |v(X)|^2 dX\right)^{1/2}+\left(\int_\Omega |X|^6 |v(X)|^2
dX\right)^{1/2}\right].
\end{multline}

For a fixed $j\in \NN$, consider the subspace $V^{h,j}$ of
$L^2(\Omega,dx_g)$, generated by all eigenfunctions of the operator
$P^h_D$ associated to the eigenvalue $\lambda_\ell(P^h_D)$ with
$\ell=0,1,\ldots,j$. Thus, $V^{h,j}$ is a ($j+1$)-dimensional space
such that
\begin{equation}\label{e:Vhj}
(P^h_Du_h,u_h)\leq \lambda_j(P^h_D)\|u_h\|^2_{L^2(\Omega,dx_g)},
\quad u_h\in V^{h,j}.
\end{equation}
Moreover, by Lemma~\ref{l:weight}, for any real $k\geq 0$, there
exists $C_k>0$ such that, for any $u_h\in V^{h,j}$,
\begin{equation}\label{e:Xu}
\| |X|^ku_h\|_{L^2(\Omega,dx_g)}\leq C_{k}(h^{k/2}+h^{(3k+1)/8}) \|
u_h\|_{L^2(\Omega,dx_g)},
\end{equation}
for any $\alpha =1,2$ and any $k\geq 0$,
\begin{equation}\label{e:Xnablau}
\| |X|^k\nabla^h_\alpha u_h\|_{L^2(\Omega,dx_g)}\leq
C_{k}(h^{(k+1)/2}+h^{(3k+5)/8})\|u_h\|_{L^2(\Omega,dx_g)}.
\end{equation}

By \eqref{e:Xu} and \eqref{e:Xnablau}, for any real $k\geq 0$, there
exists $C_k>0$ such that, for any $v_h\in L^2(\Omega,dX)$ of the
form $v_h=|g(X)|^{1/4}u_h$ with $u_h\in V^{h,j}$,
\begin{equation}\label{e:Xv}
\| |X|^kv_h\|_{L^2(\Omega,dX)}\leq C_{k}(h^{k/2}+h^{(3k+1)/8}) \|
v_h\|_{L^2(\Omega,dX)},
\end{equation}
for any $\alpha =1,2$ and for any $k\geq 0$.
\begin{multline}\label{e:Xnablav}
\left( \int_\Omega |X|^{2k} \left|\left(i h \frac{\partial}{\partial
x}-\frac12 b_0y\right)v_h(X)\right|^2 dX\right)^{1/2} \\ +
\left(\int_\Omega |X|^{2k} \left|\left(i h \frac{\partial}{\partial
y}+ \frac12 b_0x\right)v_h(X)\right|^2 dX\right)^{1/2}\\ \leq
C_{k}(h^{(k+1)/2}+h^{(3k+5)/8})\|v_h\|_{L^2(\Omega,dX)}.
\end{multline}

The estimates \eqref{e:Xv} and \eqref{e:Xnablav} allow us show for
any $v_h\in L^2(\Omega,dX)$ of the form $v_h=|g(X)|^{1/4}u_h$ with
$u_h\in V^{h,j}$, first, using \eqref{e:Vhj}, \eqref{e:upperPh} and
\eqref{e:qv}, that,
\[
q(v_h)\leq Ch^{3/2}\|v_h\|_{L^2(\Omega,dX)}^2,
\]
next, using \eqref{e:q-q}, that
\[
|q(v_h)-q^{flat}(v_h)|\leq Ch^2\|v_h\|_{L^2(\Omega,dX)}^2,
\]
and finally, using \eqref{e:P-P}, that
\begin{equation}\label{e:flatRn}
(P_{flat}^{h}v_h,v_h)\leq
(\lambda_j(P^h_D)+C_jh^{15/8})\|v_h\|_{L^2(\Omega,dX)}^2.
\end{equation}

Let $\chi$ be a function from $C^\infty_c(\RR^n)$ such that ${\rm
supp}\,\chi\subset \Omega$ and $\chi\equiv 1$ in a neighborhood of
zero. By \eqref{e:Xv} and \eqref{e:Xnablav}, it follows that, for
any $k\in\NN$ there exists $C_k>0$ such that, for any $v_h\in
L^2(\Omega,dX)$ of the form $v_h=|g(X)|^{1/4}u_h$ with $u_h\in
V^{h,j}$,
\begin{multline}\label{e:chiv}
\|(1-\chi) v_h\|_{L^2(\Omega,dX)} + \|\frac{\partial \chi}{\partial
x} v_h\|_{L^2(\Omega,dX)}+ \|\frac{\partial \chi}{\partial y}
v_h\|_{L^2(\Omega,dX)}\\ \leq C_{k}h^{k}\| v_h\|_{L^2(\Omega,dX)}.
\end{multline}
and
\begin{multline}\label{e:chinablav}
\left( \int_\Omega (1-\chi(X)) \left|\left(i h
\frac{\partial}{\partial x}-\frac12 b_0y\right)v_h(X)\right|^2
dX\right)^{1/2} \\ + \left(\int_\Omega (1-\chi(X)) \left|\left(i h
\frac{\partial}{\partial y}+ \frac12 b_0x\right)v_h(X)\right|^2
dX\right)^{1/2}\\ \leq C_{k}h^k\|v_h\|_{L^2(\Omega,dX)}.
\end{multline}

Using \eqref{e:chiv} and \eqref{e:chinablav}, it is easy to check
that, for any $k>0$, there exists $C_{k}>0$ such that
\begin{equation}\label{e:flat-chi}
|(P_{flat}^{h}(\chi v_h), \chi v_h)-(P_{flat}^{h}v_h, v_h)| \leq C_k
h^k \|v_h\|^2.
\end{equation}

Consider the self-adjoint realization of the operator $P_{flat}^{h}$
in $L^2(\RR^2,dX)$. We will the same notation $P_{flat}^{h}$ for
this operator. Consider the $(j+1)$-dimensional subspace $W^{h,j}$
of $C^\infty_c(\RR^2)$, which consists of all functions $w_h\in
C^\infty_c(\RR^2)$ of the form $w_h=\chi |g(X)|^{1/4}u_h$ with
$u_h\in V^{h,j}$. By \eqref{e:flatRn} and \eqref{e:flat-chi}, it
follows that, for any $w_h\in W^{h,j}$,
\[
(P_{flat}^{h}w_h,w_h)\leq
(\lambda_j(P^h_D)+C_jh^{15/8})\|w_h\|_{L^2(\Omega,dX)}^2.
\]
By the mini-max principle, this immediately implies that, for any
$j>0$, there exists $C_{j}>0$ such that, for $j$-th eigenvalue
$\lambda_j(P_{flat}^{h})$ of $P_{flat}^{h}$, we have
\begin{equation}\label{e:flat-D-Rn}
\lambda_j(P_{flat}^{h}) \leq \lambda_j(P^h_D)+C_jh^{15/8}.
\end{equation}

It remains to recall that the eigenvalues of the Schr\"odinger
operator with constant magnetic field and positive quadratic
potential in $\RR^n$ can be computed explicitly. More precisely
(see, for instance, \cite[Theorem 2.2]{MatUeki}), the eigenvalues of
the operator
\[
H_{b,K}=\left(i \frac{\partial}{\partial x}-\frac12
by\right)^2+\left(i\frac{\partial}{\partial y}+ \frac12
bx\right)^2+\sum_{ij}K_{ij}X_iX_j
\]
are given by
\[
\lambda_{n_1n_2}=(2n_1+1)s_1+(2n_2+1)s_2,\quad n_1,n_2\in \NN,
\]
where
\[
s_1=\frac{1}{\sqrt{2}}\left[t_K+b^2-[(t_K+b^2)^2-4d_K]^{1/2}\right]^{1/2},
\]
\[
s_2=\frac{1}{\sqrt{2}}\left[t_K+b^2+[(t_K+b^2)^2-4d_K]^{1/2}\right]^{1/2},
\]
and
\[
t_K=\operatorname{Tr}K, \quad d_K=\det K.
\]
Applying this formula to the operator $P_{flat}^{h}$, we obtain that
its eigenvalues have the form:
\[
\lambda_{n_1n_2}=(2n_1+1)s_1+(2n_2+1)s_2-hb_0,\quad n_1,n_2\in \NN,
\]
where
\begin{align*}
s_1 & =\frac{h}{\sqrt{2}}\left[h^{1/2} t+b_0^2-[(h^{1/2}
t+b_0^2)^2-4h^{1/2} d]^{1/2}\right]^{1/2}\\
& = d^{1/2}b^{-1}_0h^{3/2}+O(h^{2}),
\end{align*}
and
\begin{align*}
s_2& =\frac{h}{\sqrt{2}}\left[h^{1/2} t+b_0^2+[(h^{1/2}
t+b_0^2)^2-4h^{1/2} d]^{1/2}\right]^{1/2}\\
& = hb_0+\frac12 t b^{-1}_0h^{3/2}+O(h^{2}).
\end{align*}
Thus, we obtain
\[
\lambda_{n_1n_2}=2n_2hb_0+ \left[n_1\frac{2d^{1/2}}{b_0}+n_2
\frac{t}{b_0}+\frac{a^2}{2b_0}\right]h^{3/2}+O(h^{2}),\quad
n_1,n_2\in \NN.
\]
For $j$th eigenvalue $\lambda_j(P_{flat}^{h})$ of $P_{flat}^{h}$, we
obtain
\begin{equation}\label{e:flat}
\lambda_j(P_{flat}^{h})= \left[\frac{2d^{1/2}}{b_0}j
+\frac{a^2}{2b_0}\right]h^{3/2}+O(h^{2}).
\end{equation}
Combining \eqref{e:flat-D-Rn} and \eqref{e:flat}, we immediately
complete the proof of Theorem~\ref{t:PAV}.

\begin{proof}[Proof of Theorem \ref{t:mainbis}]
Fix $j\in \NN$. By Theorem~\ref{t:main}, there exist $C>0$ and
$h_0>0$ such that, for any $h\in (0,h_0]$,
\[
I_j\; \bigcap \; {\rm Spec}(H^h) =\{\lambda_j(H^h)\},
\]
where
\[
I_j=\left(hb_0 +h^2\left[\frac{2d^{1/2}}{b_0}j+
\frac{a^2}{2b_0}\right]-Ch^{19/8}, hb_0
+h^2\left[\frac{2d^{1/2}}{b_0}j+
\frac{a^2}{2b_0}\right]+Ch^{5/2}\right).
\]
On the other hand, by Corollary~\ref{c:dist}, for any natural $N$,
there exist $C^\prime>0$ and $h^\prime_{0} >0$ such that, for any
$h\in (0,h^\prime_{0}]$,
\[
{\rm dist}(\mu_{j0N}^h, {\rm Spec}(H^h))\leq C^\prime
h^{\frac{N+3}{2}}.
\]
Without loss of generality, we can assume that, for any $h\in
(0,\min(h_0, h^\prime_{0})]$,
\[
(\mu_{j0N}^h-C^\prime h^{\frac{N+3}{2}},\mu_{j0N}^h+C^\prime
h^{\frac{N+3}{2}})\cap I_\ell=\emptyset, \forall \ell\neq j.
\]
Hence, for any $h\in (0,\min(h_0, h^\prime_{0})]$, $\lambda_j(H^h)$
is the point of ${\rm Spec}(H^h)$, closest to $\mu_{j0N}^h$. It
follows that
\[
|\lambda_j(H^h)-\mu_{j0N}^h|\leq C^\prime h^{\frac{N+3}{2}}, \quad
h\in (0,\min(h_0, h^\prime_{0})]\,,
\]
that proves \eqref{e:mainbis} with $\alpha_{j,\ell}=\mu_{j,0,\ell}$.
\end{proof}

\section{Periodic case and spectral gaps}\label{s:gaps}
In this Section, we apply the results of Section~\ref{s:upper} to
the problem of existence of gaps in the spectrum of a periodic
magnetic Schr\"odinger operator. Some related results on spectral
gaps for periodic magnetic Schr\"odinger operators can be found in
\cite{BDP,gaps,HS88,HSLNP345,HempelHerbst95,HempelPost02,HerbstNakamura,KMS,Ko04,bedlewo,MS,Nakamura95}
(see also the references therein).

Let $ M$ be a two-dimensional noncompact oriented manifold of
dimension $n\geq 2$ equipped with a properly discontinuous action of
a finitely generated, discrete group $\Gamma$ such that $M/\Gamma$
is compact. Suppose that $H^1(M, \RR) = 0$, i.e. any closed $1$-form
on $M$ is exact. Let $g$ be a $\Gamma$-invariant Riemannian metric
and $\bf B$ a real-valued $\Gamma$-invariant closed 2-form on $M$.
Assume that $\bf B$ is exact and choose a real-valued 1-form $\bf A$
on $M$ such that $d{\bf A} = \bf B$. Write ${\bf B}=b dx_g$, where
$b\in C^\infty(M)$ and $dx_g$ is the Riemannian volume form. Let
\[
b_0=\min_{x\in M}b(x).
\]
Assume that there exist a (connected) fundamental domain $\cF$ and a
constant $\epsilon_0>0$ such that
\[
b(x) \geq b_0+\epsilon_0, \quad x\in \partial\cF.
\]

We will consider the magnetic Schr\"odinger operator $H^h$ as an
unbounded self-adjoint operator in the Hilbert space $L^2(M)$. Using
the results of \cite{diff2006}, one can immediately derive from
Theorem~\ref{t:qmodes} the following result on existence of gaps in
the spectrum of $H^h$ in the semiclassical limit.

We will use the above notation
\[
t={\rm Tr}\left(\frac12 {\rm Hess}\,b(x_0)\right), \quad d=\det
\left(\frac12 {\rm Hess}\,b(x_0)\right).
\]
For any $k\in \NN$, put
\[
c_k=(2k+1)\frac{d^{1/2}}{b_0}\\
+(2k^2+2k+1)\frac{t}{2b_0}+\frac{1}{2}(k^2+k) R(x_0).
\]

\begin{thm}
Assume that $b_0>0$ and there exist $x_0\in \cF$ and $C>0$ such that
for all $x$ in some neighborhood of $x_0$ the estimates hold:
\[
C^{-1}\, d(x,x_0)^2\leq b(x)-b_0 \leq C \, d(x,x_0)^2\,.
\]
Then, for any natural $k$ and $N$, there exist $C_{k,N}>c_k$ and
$h_{k,N}>0$ such that the spectrum of $H^h$ in the interval
\[
[(2k+1) hb_0+ h^2c_k,(2k+1) hb_0+ h^2C_{k,N}]
\]
has at least $N$ gaps for any $h\in (0, h_{k,N}]$.
\end{thm}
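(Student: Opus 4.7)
The plan is to combine Theorem~\ref{t:qmodes} with the abstract spectral-gap machinery for periodic magnetic Schr\"odinger operators developed in \cite{diff2006}. The key observation is that for fixed $k$ and varying $j$, the approximate eigenvalues produced by Theorem~\ref{t:qmodes},
\[
\mu^h_{jkN'} = (2k+1)hb_0 + h^2\bigl(c_k + 2j(2k+1)d^{1/2}/b_0\bigr) + O(h^{5/2}),
\]
are separated, as $j$ varies, by a spacing of order $h^2\cdot 2(2k+1)d^{1/2}/b_0$, whereas the quasimode residual $\|(H^h-\mu^h_{jkN'})\phi^h_{jkN'}\|$ is controlled by $h^{(N'+3)/2}$ with $N'$ as large as desired. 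Once $N'$ is large enough that $h^{(N'+3)/2} \ll h^2$, the residuals are dominated by the spacing, and this is what makes gaps visible in the semiclassical limit.

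First I would fix $k$ and apply Theorem~\ref{t:qmodes} for $j=0,1,\ldots,N$ and for some large $N'\geq N$ to be determined, obtaining approximate eigenfunctions $\phi^h_{jkN'}\in C^\infty(M)$ supported in a small neighborhood of the unique minimum $x_0\in\cF$. Since $b$ is $\Gamma$-invariant with $b\geq b_0+\epsilon_0$ on $\partial\cF$, the translates $\gamma\cdot x_0$ for $\gamma\in\Gamma$ give a discrete set of equivalent wells, one per fundamental domain, mutually separated in the Agmon metric. The assumption $H^1(M,\RR)=0$ furnishes a magnetic potential ${\bf A}$ that is globally defined, so that the family of quasimodes $\{\gamma^*\phi^h_{jkN'}\}_{\gamma\in\Gamma}$ (up to the natural cocycle of gauge transformations relating $\gamma^*{\bf A}$ and ${\bf A}$) produces, for each $\gamma$, a family of approximate eigenfunctions localized near $\gamma\cdot x_0$ with the same approximate eigenvalues $\mu^h_{jkN'}$.

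Second I would invoke the abstract result of \cite{diff2006}, which in precisely this periodic setting transfers the quasimode information into an upper bound on the widths of the spectral clusters of $H^h$ around each $\mu^h_{jkN'}$. Concretely, it gives that the part of $\operatorname{Spec}(H^h)$ lying in a neighborhood of $(2k+1)hb_0$ of diameter $O(h^2)$ is contained in the union, over $j$ in a finite range, of bands of width at most $C_{N'}h^{(N'+3)/2}$ centered on $\mu^h_{jkN'}$, while at the same time (by the self-adjointness argument of Corollary~\ref{c:dist}) each such band is non-empty. Choosing $N'$ large enough that $C_{N'}h^{(N'+3)/2} < \tfrac14\cdot 2(2k+1)d^{1/2}h^2/b_0$ for small $h$, the $N+1$ bands corresponding to $j=0,1,\ldots,N$ are pairwise disjoint, and the $N$ intervals between consecutive bands form the desired gaps. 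Setting $C_{k,N}:=c_k+(2N+1)(2k+1)d^{1/2}/b_0$ then places all $N+1$ bands strictly inside the interval $[(2k+1)hb_0+h^2c_k,\,(2k+1)hb_0+h^2C_{k,N}]$ for $h$ small.

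The main obstacle is checking that the $\phi^h_{jkN'}$ meet the hypotheses under which the results of \cite{diff2006} actually apply: they are constructed in a single coordinate patch near $x_0$ by multiplication of Gaussian-type expressions by a fixed cut-off, so they have compact support in $\cF$ and hence automatic decay away from the well, but one must also verify that the $\Gamma$-equivariant translation of these quasimodes matches the gauge convention required by \cite{diff2006}, and that the constants produced by that reference, applied with the Hermite-type quasimodes of Theorem~\ref{t:qmodes}, yield band widths comparable to the quasimode residual $h^{(N'+3)/2}$ rather than to something larger (such as the $O(h^{5/2})$ error in the leading two terms). Once this compatibility is established, the conclusion is immediate from the preceding separation-of-scales argument.
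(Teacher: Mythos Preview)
Your proposal is correct and follows the same route as the paper: the paper offers no detailed proof of this theorem, stating only that it follows immediately by combining Theorem~\ref{t:qmodes} with the results of \cite{diff2006}, and your outline fleshes out precisely that combination, with the correct separation-of-scales observation that the $h^2$ spacing between the $\mu^h_{jkN'}$ dominates the $h^{(N'+3)/2}$ quasimode residual.
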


\end{document}